\newtheorem{theorem}{Theorem}[section]
\newtheorem*{theorem*}{Theorem}
\newtheorem{lemma}[theorem]{Lemma}
\newtheorem{proposition}[theorem]{Proposition}
\theoremstyle{remark}
\newtheorem{remark}[theorem]{Remark}
\theoremstyle{definition}
\numberwithin{equation}{section}
\numberwithin{table}{section}
\numberwithin{figure}{section}
\newcommand{\rn}{{\mathbb{R}^n}}
\newcommand{\pp}{\partial}
\newcommand{\w}{\omega}
\newcommand{\W}{\Omega}
\newcommand{\eps}{\varepsilon}
\newcommand{\llangle}{\left\langle}
\newcommand{\rrangle}{\right\rangle}
\def\R{{\mathbb {R}}}
\def\N{{\mathbb {N}}}
\def\T{{\mathcal {T}}}
\def\V{{\mathbb {V}}}
\title[FE approximation for the fractional eigenvalue problem]{Finite element approximation for the fractional eigenvalue problem}
\author[J. P. Borthagaray]{Juan Pablo Borthagaray}
\address{J.P. Borthagaray \hfill\break\indent IMAS - CONICET and 
Departamento de Matem\'a\-tica, 
\hfill\break\indent
FCEyN - Universidad de Buenos Aires, 
\hfill\break\indent
Ciudad Universitaria, 
\hfill\break\indent
Pabell\'on I  (1428) Buenos Aires, Argentina.}
\email{jpbortha@dm.uba.ar}
\thanks{Research of the first author has been partially supported by CONICET under grant PIP 2014-2016 11220130100184CO}
\author[L. M. Del Pezzo]{Leandro M. Del Pezzo}
	\address{Leandro M. Del Pezzo \hfill\break\indent
		CONICET and UTDT \hfill\break\indent
		Departamento de Matem\'aticas y 
		Estad\'istica
		\hfill\break\indent Universidad Torcuato Di Tella
		\hfill\break\indent Av. Figueroa Alcorta 7350 (C1428BCW)
		\hfill\break\indent Buenos Aires, ARGENTINA. }
	\email{ldelpezzo@utdt.edu}
	\urladdr{http://cms.dm.uba.ar/Members/ldpezzo/}
\author[S. Mart\'inez]{Sandra Mart\'inez}
\address{S. Mart\'inez \hfill\break\indent IMAS - CONICET and 
Departamento de Matem\'a\-tica, 
\hfill\break\indent
FCEyN - Universidad de Buenos Aires, 
\hfill\break\indent
Ciudad Universitaria, 
\hfill\break\indent
Pabell\'on I  (1428) Buenos Aires, Argentina.}
\email{smartin@dm.uba.ar}
\subjclass[2010]{46E35, 35P15,	49R05 	65N25,	65N30}
\keywords{Fractional Laplacian, eigenvalue problem, finite element method}
\begin{document}

\begin{abstract}
	The purpose of this work is to study a finite element method for finding solutions to  the eigenvalue problem for the  fractional Laplacian. We prove that the discrete eigenvalue problem converges to the continuous one and we show the order  of such convergence. Finally, we perform some numerical experiments and compare our results with previous work by other  authors.	
\end{abstract}

\maketitle
\section{Introduction and Main Results}

	Anomalous diffusion phenomena are ubiquitous in nature \cite{Klafter,MetzlerKlafter}, and
	the study of nonlocal operators has been an active area of research in different branches of 
	mathematics. Such operators arise in applications as image processing~\cite{BuadesColl,GattoHesthaven,GilboaOsher,YifeiZhang}, finance \cite{CarrHelyette,RamaTankov}, electromagnetic fluids  \cite{McCayNarasimhan}, peridynamics  \cite{Silling}, porous media flow \cite{BensonWheatcraft,CushmanGinn}, among others. 
	
	\medskip
	
	One striking example of a nonlocal operator is the fractional Laplacian $(-\Delta)^s$, defined by
	\[
		(-\Delta)^su(x) \coloneqq  2 C(n,s)
		\int_{\mathbb{R}^n}\dfrac{u(x)-u(y)}{|x-y|^{n+2s}}\,dy,
		\quad x\in\mathbb{R}^n.
	\]
	Here the integral is understood in the principal value sense and the normalization constant $C(n,s)$ is given by
	\[
		C(n,s) \coloneqq \frac{2^{2s-1} s 
		\Gamma(s+\frac{n}{2})}{\pi^{n/2} 
		\Gamma(1-s)} .
	\]
	In the theory of stochastic processes, this operator appears as the infinitesimal generator of a stable L\'evy process, see 
	for instance \cite{Bertoin,Valdinoci}. Moreover, the fractional Laplacian is also one of the simplest examples of a 
	pseudo-differential operator, because its symbol is just $P(\xi)=|\xi|^{2s}.$

	\medskip
	
	An interesting problem concerning the fractional Laplacian is to find its eigenspaces on bounded domains. 
	Namely, to find a positive number $\lambda$ (eigenvalue) and a function $u\not\equiv0$ (eigenfunction) such that
	\begin{equation}\label{eq:int.autovalores}
		\left\lbrace\begin{array}{rl}
			(-\Delta)^s u = \lambda  u &\mbox{ in }\W, \\
				u = 0 &\mbox{ in }\W^c=\mathbb{R}^n\setminus\Omega,
		\end{array} \right.
	\end{equation}
	where $\Omega$ is a bounded domain in $\mathbb{R}^n$ and $s\in(0,1)$.
	Observe that, due to the fact that pointwise values of $(-\Delta)^s u$ depend on the value of $u$ over the 
	whole space, boundary conditions need to be substituted by volume constraints on the complement of $\W$. 
	
{A natural application of the problem we are considering in this paper is given by the fractional Schr\"odinger equation. This equation arises from extending the Feynman path integral approach from Brownian-like quantum mecanical paths --that lead to the classical Schr\"odinger equation-- to L\'evy-like paths \cite{Laskin}. In this regard, eigenfunctions of the fractional Laplacian correspond to the energy states of the system being modeled.
This has motivated researchers to study this problem, both from the physical, mathematical and computational point of view. Among the various references in these subjects, we refer the reader to  \cite{Amore,Antoine,Bao,ChenSong,DuoZhang,Luchko} for further details.
}	
	
	Even if $\Omega$ is an interval, it is very challenging to obtain closed analytical expressions for the eigenvalues and eigenfunctions of the fractional Laplacian. This motivates the utilization of discrete approximations of this problem (see, for example, \cite{Ghelardoni,Kwasnicki,ZoiaRossoKardar}); in this work we consider a finite element method. In first place, we prove that the discrete eigenvalue problem converges to the continuous one. Then, we show the order of convergence for eigenvalues and eigenfunctions, both in the energy and the $L^2$-norm. Orders of convergence are increased by considering suitably graded meshes that stem from a precise characterization of the behavior of eigenfunctions near the boundary of $\W$. Finally, we perform some numerical experiments and compare our results with previous work by other authors. These results are in good agreement with our theory. 
	
The finite element method is flexible enough to deal with non-convex domains, and enables us to provide estimates and sharp upper bounds for eigenvalues even in this context. Moreover, as a consequence of our numerical experiments in the $L$-shaped domain $\W = [-1,1]^2\setminus [0,1]^2$, we conjecture that the first eigenfunction for this domain is as regular as the first one in any smooth domain.

{Due to the nonlocal nature of the problem, a straightforward implementation of the finite element method demands a double loop over the elements to assembly the stiffness matrix. Thus, the complexity of this routine is quadratic with respect to the number of elements. As reported in \cite{ABB}, this step requires about 99\% of the total CPU time. This issue has been tackled in \cite{AG}, where sparse approximations of the stiffness matrix have been proposed and analyzed. Furthermore, the condition number of the stiffness matrix $A$ ($\kappa(A)$) corresponding to the fractional Laplacian of order $s$ using standard piecewise linear finite elements over a mesh with size $h$ scales as $\kappa(A) \simeq h^{-2s}$. Therefore, in the numerical examples performed for this work we have solved the discrete systems by using a direct solver. 
}
	
\subsection*{Main Results} In order to state our results we need to collect some notation and definitions. 
	The natural functional space for the eigenvalue problem \eqref{eq:int.autovalores} is
	\[
		\widetilde{H}^s (\W) \coloneqq \left\{ v \in H^s(\rn) \colon \text{ supp } v \subset \bar{\W} \right\},
	\]
	where $H^s(\R^n)$ is the space of all functions $u\in L^2(\R^n)$ such that
	\[
		|v|_{H^s(\R^n)}^2 \coloneqq \iint_{\R^{2n}} 
				\frac{|v(x)-v(y)|^2}{|x-y|^{n+2s}} \, dx \, dy<\infty. 
	\]
	Moreover, $\left(\V,\|\cdot\|_\V\right)\coloneqq \left(\widetilde{H}^s (\W), \sqrt{C(n,s)} \, |\cdot|_{H^s(\R^n)} \right)$
	is a Hilbert space with the inner product
	\[
			\llangle u, v\rrangle \coloneqq 
			C(n,s) \iint_{\mathbb{R}^{2n}} 
			\frac{(u(x)-u(y))(v(x)-v(y))}{|x-y|^{n+2s}} \, dx \, dy.
	\]
	Obviously, the constant $\sqrt{C(n,s)}$ has no effect on the definition of the space; it is included in order to make the notation simpler in the rest of the paper.
	The fractional space $H^s(\R^n)$ can also be defined for any $s>1.$
	If $s = m + \sigma$, where $m \in\mathbb{N}$ and $\sigma \in (0,1)$, 
	$H^s(\R^n)$ is the space of all functions $v \in H^m(\R^m)$ such that its weak derivatives of order $m$ belong to $H^{\sigma}(\R^n)$. For more details, see Section \ref{sec.preliminares}.
	
	\medskip
	
	In this context, the eigenvalue problem \eqref{eq:int.autovalores} has the following variational formulation:
	find $\lambda\in (0,+\infty)$ and 
	$u\in \V $ such that $u\not\equiv 0$ and  
	\begin{equation} \label{eq:int.autovalores.de.intro}
		\llangle u, v \rrangle = 
		\lambda (u, v)
		\quad \mbox{ for all } v \in \V, 
	\end{equation}
	where  $( \cdot, \cdot )\colon  L^2(\Omega) \times L^2(\Omega) \to \mathbb{R}$ is the bilinear form 
	\[
		( u, v) \coloneqq \int_{\Omega} u(x)v(x) \, dx.  
	\]
		
	It is well-known (see, for example, \cite{MR3002745}) that there is an infinite  sequence of eigenvalues $\{\lambda^{(k)}\}_{k\in\mathbb{N}}$, 
$$
0<\lambda^{(1)}< \lambda^{(2)}\leq \cdots \leq 	\lambda^{(k)}\leq\cdots,\quad  \lambda^{(k)}\to\infty   \mbox{ as }  k\to \infty,
$$
where the same eigenvalue can be repeated several times according to its multiplicity. The corresponding eigenfunctions $u^{(k)}$, normalized by $\|u^{(k)}\|_{L^2(\Omega)}=1$, form a complete orthonormal set in $L^2(\Omega).$ 
See also \cite{MR3089742,ServadeiValdinoci,MR3271254} and the references therein for further details on the fractional eigenvalue problem.
		
	\medskip
		
	Let us now introduce the discrete space. Let  $\T_h$ be a family of triangulations of $\W$ satisfying
		\begin{align}
		& \exists \sigma > 0 \mbox{ s.t. } h_T \leq \sigma \rho_T,  \tag{Regularity} 
			\label{eq:regularity.intro}
	\end{align}
for any element $T \in \mathcal{T}_h$, where $h_T$ is the diameter of $T$ and $\rho_T$ is the radius of the largest ball contained in $T$.
This is the only requirement we need to impose to our family of triangulations.
	
	We consider continuous piecewise linear functions on $\mathcal{T}_h$, namely
	$$
		\V_h \coloneqq \left\{ v \in \V \colon v \big|_T \in \mathcal{P}_1(T) \ \forall T \in \T_h \right\} .
	$$	
Our Galerkin approximation consists in looking for discrete 
	eigenvalues $\lambda_{h}\in \mathbb{R}$ and $u_h\in\V_h$ such that
	$u_h\not\equiv0$ and
	\begin{equation} \label{eq:int.autovalores.di.intro}
		\llangle u_h, v\rrangle=\lambda_h(u_h,v)\quad\forall
		v\in\V_h.
	\end{equation}
	We can order  the discrete eigenvalues of \eqref{eq:int.autovalores.di.intro} as follows
	\[
		0<\lambda_h^{(1)}\leq \lambda_h^{(2)}\leq \cdots \leq 
		\lambda_h^{(k)}\leq \cdots \le\lambda_h^{(\text{dim} \V_h )},
	\]
	where the same eigenvalue is repeated according to its multiplicity. The corresponding eigenfunctions 
	$u^{(k)}_h$  (normalized by $\|u^{(k)}_h\|_{L^2(\Omega)}=1$) form an 
	orthonormal set in $L^2(\Omega).$ 
		
	\medskip
				
	The purpose of this work is to prove the convergence of the 
	discrete problem \eqref{eq:int.autovalores.di.intro}  to the continuous \eqref{eq:int.autovalores.de.intro}.  
	
Our first result concerning the convergence is to determine it in gap distance,
	that is, we prove that the discrete eigenvalue problem  converges to the continuous, 
	following the definition of convergence given in 
	\cite{Kato,Boffia} (see also \cite{Boffi}). More precisely, we define 
	the gap between Hilbert spaces $E, F \subset H$ by
	\[
		\delta(E,F)=\sup_{u\in E,\|u\|_{H}=1} \inf_{v\in F} \|u-v\|_{H},
		\quad 
		\hat{\delta}(E,F)=\max(\delta(E,F),\delta(F,E)).
	\]
	Then, taking $H=\V,$ we say that the discrete eigenvalue problem 
	\eqref{eq:int.autovalores.di.intro} converges to the continuous one 
	\eqref{eq:int.autovalores.de.intro} if, for any $\varepsilon>0$ and $k>0$, there is 
	$h_0>0$ such that 
	\[
		\max_{1\leq i \leq m(k)} |\lambda^{(i)}-\lambda_h^{(i)}|
		\leq \varepsilon,\qquad
		\hat{\delta}\left( \bigoplus_{i=1}^{m(k)}  E^{(i)},
		 \bigoplus_{i=1}^{m(k)}  E_h^{(i)}\right)\leq \varepsilon,
	\]
	for all $h<h_0$. 
	Here, $m(k)$ is the dimension of the space spanned by the first distinct $k$ eigenspaces
	and $E^{(i)}$ and $E^{(i)}_h$ are the eigenspace and the discrete eigenspace associated to $\lambda^{(i)}$
	and $\lambda^{(i)}_h,$ respectively.

\begin{remark} \label{rem:extension}
We remark here that to obtain  convergence in gap distance, we only need to assume that the extension $H^s(\W)\to H^s(\rn)$ is continuous. This is in turn equivalent to the following condition \cite{zhou2015fractional}: there is a constant $C>0$ such that for all $x \in \W$ and all $r \in (0,1]$, 
\begin{equation} \label{eq:extension}
|\W \cap B(x,r)| \ge C r^n \quad \forall x \in \W.
\end{equation} 
\end{remark}		

\begin{theorem}\label{teo:gap} If $\W$ is a fractional extension domain, then 
		the discrete eigenvalue problem 
		\eqref{eq:int.autovalores.di.intro} converges to the continuous one 
		\eqref{eq:int.autovalores.de.intro}.
	\end{theorem}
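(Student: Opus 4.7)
The plan is to recast both eigenvalue problems via their solution operators and then appeal to the Kato--Boffi abstract convergence framework of \cite{Kato,Boffi,Boffia}.

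First I would define $T\colon \V \to \V$ by declaring $Tf\in\V$ to be the unique solution of $\llangle Tf, v\rrangle = (f, v)$ for every $v\in\V$. Lax--Milgram provides well-posedness, symmetry of the bilinear form $\llangle\cdot,\cdot\rrangle$ implies that $T$ is $\V$-self-adjoint, and the compact embedding $\V\hookrightarrow L^2(\W)$ composed with the $L^2$-continuity of the map $f\mapsto Tf$ shows that $T$ is compact on $\V$. The eigenpairs $(\lambda^{(k)}, u^{(k)})$ of \eqref{eq:int.autovalores.de.intro} correspond exactly to eigenpairs $(1/\lambda^{(k)}, u^{(k)})$ of $T$, and the same is true at the discrete level for the solution operator $T_h\colon\V\to\V_h$ defined analogously. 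Galerkin orthogonality then yields the key identity $T_h = \Pi_h T$, where $\Pi_h$ is the $\V$-orthogonal projection of Proposition~\ref{estimacionlinf0}.

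Next I would invoke the Kato--Boffi theory to reduce the gap statement to the single operator-norm estimate $\|T-T_h\|_{\mathcal{L}(\V)}\to 0$ as $h\to 0$. Because $T_h = \Pi_h T$,
\[
\|T-T_h\|_{\mathcal{L}(\V)} \;=\; \sup_{\|f\|_\V\le 1}\|(I-\Pi_h)Tf\|_\V \;\le\; \sup_{u\in K}\|(I-\Pi_h) u\|_\V,
\]
where $K := \overline{T(B_\V)}$ is compact in $\V$ thanks to the compactness of $T\colon\V\to\V$. Since $\Pi_h$ is an orthogonal projection, $\|\Pi_h\|_{\mathcal{L}(\V)} = 1$ and the family $\{\Pi_h\}_{h>0}$ is equicontinuous, so uniform convergence over $K$ follows once the pointwise statement $\Pi_h u\to u$ in $\V$ is established for every $u\in\V$. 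I would prove the latter by a density argument: given $u\in\V$ and $\eta>0$, pick $\phi\in C^\infty_c(\W)$ with $\|u-\phi\|_\V<\eta$ and apply Proposition~\ref{estimacionlinf0} (whose proof actually supplies the estimate \eqref{estimacionlinf1} for any function in $\widetilde{H}^{s+\nicefrac12-\eps}(\W)$, not only for eigenfunctions) to conclude $\|\phi-\Pi_h\phi\|_\V\to 0$; the triangle inequality then finishes the argument.

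The step I expect to be the crux is the upgrade from pointwise to uniform convergence of $I-\Pi_h$ over the unit ball, which is precisely where compactness of $T\colon\V\to\V$ enters essentially: without it one cannot replace a ``$\sup$ over $\|f\|_\V\le 1$'' with a ``$\sup$ over a compact set''. Once the norm convergence $\|T-T_h\|_{\mathcal{L}(\V)}\to 0$ is in hand, the abstract spectral machinery of \cite{Boffi,Boffia} delivers both the convergence of eigenvalues and the convergence of the direct sums of eigenspaces in the gap distance required by Theorem~\ref{teo:gap}.
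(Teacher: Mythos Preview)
Your argument is correct and reaches the same conclusion via the Kato--Boffi framework, but the route you take differs from the paper's in two noteworthy respects.

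First, the paper defines the solution operators on the larger space, $T, T_h \colon L^2(\W) \to \V$, and proves the norm convergence $\|T - T_h\|_{\mathcal{L}(L^2(\W),\V)} \to 0$ by a direct subsequence extraction: for each $h$ it picks $f_h$ with $\|f_h\|_{L^2}=1$ nearly attaining the supremum, passes to a weakly convergent subsequence, identifies the weak limit of $T_{h}f_{h}$ as $Tf$ using the interpolant $I_h\varphi \to \varphi$ on test functions, and then upgrades to strong convergence by the energy identity. Compactness of $T$ is likewise argued by hand via weak-to-strong convergence in $\V$.

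You instead work with $T\colon \V \to \V$, exploit the clean factorization $T_h = \Pi_h T$, and replace the paper's subsequence chase by the abstract observation that pointwise convergence of the equicontinuous family $\{I-\Pi_h\}$ becomes uniform on the compact set $\overline{T(B_\V)}$. This is shorter and more structural; it isolates exactly why compactness of $T$ is the essential hypothesis. The paper's hands-on argument, on the other hand, makes the mechanism completely explicit and avoids appealing to the (easy) lemma on uniform convergence over compacta. Both approaches ultimately feed into the same Proposition~7.4/Remark~7.5 of \cite{Boffi}.

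One small remark: your invocation of Proposition~\ref{estimacionlinf0} for a smooth $\phi$ is slightly indirect, since that statement is phrased only for eigenfunctions; it would be cleaner to cite the quasi-interpolation estimate \eqref{estiminterpol} together with the best-approximation identity \eqref{eq:cea} directly, which is what the paper's proof of that proposition actually uses and which applies to any sufficiently regular function.
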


Having established the convergence of the discrete problem to the continuous,  we next state the order of such a convergence.	
To this end, we need to provide a Sobolev regularity result. This, in turn, requires some additional assumptions on the domain. We prove the following.
	\begin{proposition}\label{prop:regintro} 
		Let $\W \subset \rn$ be a Lipschitz domain satisfying the exterior ball condition and let $u$ be an eigenfunction of $(-\Delta)^s$ in $\W$ with homogeneous Dirichlet boundary conditions. Then, 
		$u \in \widetilde{H}^{s+\nicefrac12-\eps}(\W)$ for any $\eps > 0.$
		
		Moreover, considering the weighted Sobolev scale (cf. \eqref{eq:weighted_sobolev} below) it also holds that $u \in H^{1+s-\eps}_{\nicefrac12 -2\eps}(\rn)$ for any $\eps > 0.$
	\end{proposition}
	
The regularity in standard spaces in the previous proposition is utilized to prove an a priori error bound for the finite element approximations with meshes satisfying \eqref{eq:regularity.intro}. Moreover, the weighted regularity above enables to consider suitably graded meshes (see the definition \eqref{eq:H} in Subsection \ref{ss:fe}), and these deliver an enhanced order of convergence. 
Upon proving approximation properties of the discrete spaces considered, an application of Babu\v ska-Osborn theory \cite{BO91} allows to deduce the rate of convergence for the eigenvalues and for the eigenfunctions in the energy norm, both for uniform and graded meshes.	
	\begin{theorem}\label{teo:ordenenergia}
	Let $\W \subset \rn$ be a Lipschitz domain satisfying the exterior ball condition and let $\lambda^{(k)}$ be an eigenvalue of multiplicity $m$ (that is, $\lambda^{(k)}=\lambda^{(k+1)}=\cdots=\lambda^{(k+m-1)}$ and $\lambda^{(i)} \neq \lambda ^{(k)}$ for $i\neq k,\dots,k+m-1 $). Consider the Galerkin approximations given by \eqref{eq:int.autovalores.di.intro} on a shape-regular familiy of meshes.
\begin{enumerate} 
	\item For any $\varepsilon>0,$ there exists a positive constant $C$ independent of $h$ 
		such that
		\[
			0\leq \lambda_h^{(j)}-\lambda^{(k)}\leq C h^{1-\varepsilon} \quad \forall k\leq j\leq k+m-1.
		\]
		
Moreover, if $u^{(k)}$ is an eigenfunction associated to $\lambda^{(k)}$, there is
		\[
		\{ w_h^{(k)} \} \subset E_h^{(k)} \oplus \ldots \oplus E_h^{(k+m-1)}
		\]
		such that
		\[
			\|u^{(k)}-w^{(k)}_h\|_{\V}\leq C h^{\nicefrac12-\varepsilon}.
		\]
		
	\item On the other hand, if $s>\nicefrac12$ and the meshes are graded according to \eqref{eq:H} with $\mu = 2$, then the estimates above can be refined to be
	\[
			0\leq \lambda_h^{(j)}-\lambda^{(k)}\leq C h^{2-\varepsilon} \quad \forall k\leq j\leq k+m-1.
		\]
and
		\[
			\|u^{(k)}-w^{(k)}_h\|_{\V}\leq C h^{1-\varepsilon}.
		\]		
		\end{enumerate}
	\end{theorem}

Finally, to prove convergence orders in the $L^2$ norm, we require smoothness on the domain. This regularity assumption is required in order to apply an Aubin--Nitsche duality argument. We obtain the following.

\begin{theorem}\label{teo:ordenL2}
Assume $\W$ is a smooth domain and let  $\alpha = \min\{ s, 1/2 - \eps \}$ for any $\eps >0$. Then,
 if $\lambda^{(k)}$ is an eigenvalue of multiplicity $m$ and if $u^{(k)}$ is an eigenfunction associated to $\lambda^{(k)}$, there is
		\[
		\{ w_h^{(k)} \} \subset E_h^{(k)} \oplus \ldots \oplus E_h^{(k+m-1)}
		\]
		such that
		\begin{equation}\label{eq:convL2}
					\|u^{(k)}-w^{(k)}_h\|_{L^2(\Omega)}\leq C h^{\alpha+\nicefrac12-\eps}.	
		\end{equation}
\end{theorem}

In order to illustrate the convergence estimates obtained in Theorems \ref{teo:ordenenergia} and \ref{teo:ordenL2}, we present the results of numerical tests for finite element discretizations of one and two-dimensional eigenvalue problems. Moreover, in the latter case, some examples in domains that do not satisfy the hypotheses of Proposition 
\ref{prop:regintro} are displayed. These examples provide numerical evidence that the assertion of this proposition still holds true under weaker assumptions about the domain.

\subsection*{The paper is organized as follows} Section \ref{sec.preliminares} collects the notation we employ, and reviews some previous works on the problem under consideration. In particular, regularity of eigenfunctions is proved. The section concludes with a discussion of certain aspects of finite element approximations of the fractional Laplacian. Afterwards, Section \ref{sec.conv} deals with the convergence of the discrete eigenvalue problem to the continuous one in gap distance. 
In Section \ref{sec:orders}, proof of the orders of convergence for eigenvalues and eigenfunctions are given, including estimates for graded meshes. Finally, numerical experiments are discussed in Section \ref{sec:numerico}.

\section{Preliminaries and Definitions}
\label{sec.preliminares}
	In this section we review the basic aspects of the problem under consideration. In first place, we set notation regarding 
	Sobolev spaces. Afterwards, we analyze theoretical properties of the eigenvalue problem \eqref{eq:int.autovalores}. Regularity 
	results for weak solutions of fractional Laplace equations are recalled next. The section concludes with the introduction of 
	the finite element spaces we work with.

\subsection{Sobolev spaces}	
	Given an open set $\W \subset \rn$ and $s \in(0,1)$, define the fractional Sobolev space $H^s(\W)$ as
	\[
		H^s(\W) \coloneqq \left\{ v \in L^2(\W) \colon |v|_{H^s(\W)} < \infty \right\},
	\]
	where $|\cdot|_{H^s(\W)}$ is the seminorm
	\[
		|v|_{H^s(\W)}^2 \coloneqq \iint_{\W^2} 
		\frac{|v(x)-v(y)|^2}{|x-y|^{n+2s}} \, dx \, dy. 
	\]
	Obviously, $H^s(\W)$ is a Hilbert space endowed with the norm $\|\cdot\|_{H^s(\W)} = \|\cdot\|_{L^2(\W)} + |\cdot|_{H^s(\W)} .$
	If $s>1$ and it is not an integer, the decomposition $s = m + \sigma$, where $m \in \mathbb{N}$ and $\sigma \in (0,1)$, 
	allows to define $H^s(\W)$ by setting
	\[
		H^s(\W) \coloneqq \left\{ v \in H^m(\W) \colon |D^\alpha v|_{H^{\sigma}(\W)} < \infty \text{ for all } \alpha \text{ s.t } 
		|\alpha| = m \right\}.
	\]
 
	Let us also define the space of functions supported in $\W$, 
	\[
		\widetilde{H}^s (\W) \coloneqq \left\{ v \in H^s(\rn) \colon \text{ supp } v \subset \bar{\W} \right\}.
	\]	
	For $0\le s \le 1$ and if $\W$ is a Lipschitz domain, this space may be defined through interpolation,
	\[ 
		\widetilde{H}^s (\W) = \left[L^2(\W), H^1_0(\W) \right]_{s}.
	\]
	Moreover, depending on the value of $s$, different characterizations of this space are available (see, for example 
	\cite[Chapter 11]{LionsMagenes}). If $s<\nicefrac12$ the space $\widetilde{H}^s (\W)$ coincides with $H^s (\W)$, and if $s>\nicefrac12$ it may 
	be characterized as the closure of $C^\infty_0(\W)$ with respect to the $|\cdot|_{H^s (\W)}$ norm. In the latter case, it is also customary to denote it by $H^s_0(\W)$. The particular case of $s=\nicefrac12$ gives raise to the Lions-Magenes space $H^{\nicefrac12}_{00}(\W)$, 
	which can be characterized by
	\[
		H^{\nicefrac12}_{00} (\W) \coloneqq \left\{ v \in H^{\nicefrac12}(\W) \colon \int_\W \frac{v(x)^2}{\text{dist}(x,\partial \W)} \, dx < 
		\infty \right\}.
	\]
	Note that the inclusion  $H^{\nicefrac12}_{00}(\W) \subset H^{\nicefrac12}_{0}(\W) = H^{\nicefrac12}(\W)$ is strict.

	It is apparent that $\llangle \cdot, \cdot\rrangle$
	defines an inner product on $\widetilde{H}^s(\W).$ In addition,
	the norm induced by it, which is just 
	the $H^s(\rn)$ seminorm, is equivalent to the 
	full $H^s(\rn)$ norm on this space, because of the following well known result.
	See for instance \cite[Lemma 2.5]{MR3556755}.

	\begin{proposition}[Poincar\'e inequality] 
		Let $\W$ be a bounded domain, then there is a constant $c=c(\W,n,s)$ such that 
		\begin{equation} \label{eq:poincare}
			\| v \|_{L^2(\W)} \leq c |v|_{H^s(\rn)}  \quad \forall v \in 
			\widetilde{H}^s(\W) .
		\end{equation}
	\end{proposition}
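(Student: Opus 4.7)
The plan is to give a direct, hands-on proof that exploits the fact that $v \in \widetilde{H}^s(\Omega)$ vanishes outside $\bar\Omega$, plus the boundedness of $\Omega$. The idea is the standard ``use-a-ball-outside-the-support'' trick: since $\Omega$ is bounded (and, as is tacitly assumed in the paper, $\mathbb{R}^n \setminus \bar{\Omega}$ is nonempty), I can fix once and for all an open ball $B \subset \mathbb{R}^n \setminus \bar{\Omega}$ of positive radius; then any $v \in \widetilde H^s(\Omega)$ vanishes a.e.\ on $B$.

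With $B$ fixed, let $R \coloneqq \sup_{x \in \Omega,\, y \in B} |x-y|$, which is finite because $\Omega$ and $B$ are bounded. For every $x \in \Omega$, since $v(y)=0$ for a.e.\ $y \in B$,
\[
v(x)^2 \;=\; \frac{1}{|B|}\int_B \bigl(v(x)-v(y)\bigr)^2 \, dy \;\leq\; \frac{R^{n+2s}}{|B|}\int_B \frac{|v(x)-v(y)|^2}{|x-y|^{n+2s}}\, dy.
\]
Integrating in $x$ over $\Omega$ and enlarging the domain of integration on the right to all of $\mathbb{R}^{2n}$ gives
\[
\|v\|_{L^2(\Omega)}^2 \;\leq\; \frac{R^{n+2s}}{|B|}\iint_{\mathbb{R}^{2n}} \frac{|v(x)-v(y)|^2}{|x-y|^{n+2s}}\, dx\, dy \;=\; \frac{R^{n+2s}}{|B|}\, |v|_{H^s(\mathbb{R}^n)}^2,
\]
which is exactly \eqref{eq:poincare} with $c = (R^{n+2s}/|B|)^{1/2}$, a constant depending only on $\Omega$, $n$, $s$.

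There is no genuine obstacle here. The only minor points to double-check are (i) the selection of $B$ in $\mathbb{R}^n \setminus \bar\Omega$, which is immediate from $\Omega$ being a bounded domain; (ii) the interpretation of ``$\mathrm{supp}\, v \subset \bar\Omega$'' as $v = 0$ almost everywhere on $\mathbb{R}^n \setminus \bar\Omega$, so that $v\equiv 0$ a.e.\ on $B$ makes sense inside the Gagliardo double integral; and (iii) the application of Fubini, which is justified because the integrand is nonnegative. An alternative route would be to invoke the compact embedding $\widetilde H^s(\Omega)\hookrightarrow L^2(\Omega)$ and argue by contradiction via a minimizing sequence, but the direct computation above is shorter, constructive, and gives an explicit constant.
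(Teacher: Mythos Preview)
Your proof is correct: the ``ball-outside-the-support'' trick is a standard and clean way to obtain the fractional Poincar\'e inequality, and every step (choice of $B$, the pointwise bound using $|x-y|\le R$, the enlargement to $\mathbb{R}^{2n}$) is justified. Note, however, that the paper does not actually prove this proposition; it is simply stated as a ``well known result'' with no argument given. So there is nothing to compare against on the paper's side, and your direct, constructive proof with an explicit constant is a genuine addition rather than a restatement.
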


     For the proof of the following  result, see e.g. \cite{MR2895178,Hitchhikers}.

\begin{proposition} \label{prop:compact}
    Let $\W$ be an extension domain (cf. Remark \ref{rem:extension}). Then, the inclusion 
    $\widetilde{H}^s(\W) \hookrightarrow L^2(\R^n)$ is compact.
\end{proposition}

\medskip 

Weighted spaces are a customary tool when dealing with singular solutions. 
As in \cite{AcostaBorthagaray}, we define the following weighted fractional Sobolev spaces. The weights we consider are powers of the distance to the boundary of $\Omega$. We introduce the notation  
\begin{equation} \label{eq:def_delta2}
\delta(x,y) = \min \{ \text{dist}(x,\partial \W), \text{dist}(y,\partial \W) \}.
\end{equation} 
Let $s = m + \sigma$, with $m \in \N$ and $\sigma \in (0,1)$, then 
\begin{equation} \label{eq:weighted_sobolev}
H^{s}_\alpha (\W) = \left\{ v \in H^m (\W) \colon | D^\beta v |_{H^{\sigma}_\alpha (\W)} < \infty \ \forall \beta \in \N^n \mbox{ s.t. } |\beta| = m \right\} ,
\end{equation}
where 
$$
| w |_{H^{\sigma}_\alpha (\W)} = \iint_{\W\times\W} \frac{|w(x)-w(y)|^2}{|x-y|^{n+2\sigma}} \, \delta(x,y)^{2\alpha} dx \, dy.
$$

We equip this space with the norm    
$$
\| v \|_{H^{s}_\alpha (\W)}^2 = \| v \|_{H^m (\W)}^2 + \sum_{|\beta| = m } 
| D^\beta v |_{H^{\sigma}_\alpha (\W)} . $$

We also need to define spaces over $\rn$. The global weighted Sobolev space $H^s_{\alpha, \W}(\rn)$ is
\begin{equation*} 
H^{s}_{\alpha, \W}(\rn) = \left\{ v \in H^m (\rn) \colon | D^\beta v |_{H^{\sigma}_{\alpha,\W} (\rn)} < \infty \ \forall \beta \in \N^n \mbox{ s.t. } |\beta| = m \right\} ,
\end{equation*}
where 
$$
| w |_{H^{\sigma}_{\alpha,\W} (\rn)} = \iint_{\rn\times\rn} \frac{|w(x)-w(y)|^2}{|x-y|^{n+2\sigma}} \, \delta(x,y)^{2\alpha} dx \, dy.
$$
The norm on this space is  
$$
\| v \|_{H^{s}_{\alpha,\W} (\rn)}^2 = \| v \|_{H^m (\rn)}^2 + \sum_{|\beta| = m } 
| D^\beta v |_{H^{\sigma}_{\alpha,\W} (\rn)} . $$
Whenever the set $\W$ is clear from the context,  we drop the reference to it in the global case and simply write $H^s_{\alpha} (\rn)$.

\begin{remark}
 \label{rem:pesosA2}
Although  we are interested in the case $ \alpha\ge 0$, we recall that in the definition of weighted Sobolev spaces $H^m_\alpha(\W)$, with $m$ being a nonnegative integer, arbitrary powers of $\delta(x)$ can be considered \cite[Theorem 3.6]{Kufner}. On the other hand, for general  weights some restrictions must be taken into account in order to get an adequate definition of the spaces, namely, to ensure their completeness. A classical family of weights is that of the Muckenhoupt $A_2$ class. In the global version $H^s_{\alpha} (\rn)$ we need to restrict the range of $\alpha$ to $|\alpha| < 1/2$ in order to have $\delta^{2\alpha}\in A_2$. 
\end{remark}


\subsection{Eigenvalue problem}
	In the sequel, we work within the Hilbert space
	$$
		(\V, \| \cdot \|_\V) \coloneqq (\widetilde{H}^s(\W), \sqrt{C(n,s)} \, |\cdot|_{H^s(\rn)}).
	$$ 
	
	In \cite{MR3002745}, the authors prove that for any $k\in\mathbb{N}$
	the eigenvalues of \eqref{eq:int.autovalores.de.intro} can be characterized as follows:
	\[
		\lambda^{(k)}=
		\min\left\{\dfrac{\|u\|_{\V}^2}{\|u\|_{L^2(\Omega)}^2}
		\colon u\in \V^{(k)}\setminus
		\{0\}
		\right\},
	\]
	where $\V^{(1)}=\V$ and
	\[
		\V^{(k)}\coloneqq\left\{u\in\V\colon
		\llangle u,u^{(j)}\rrangle=0\quad\forall j=1,\dots,k-1\right\}
	\]
	for all $k\ge2.$ Therefore, by the min-max theorem,
	\[
		\lambda^{(k)} = \min_{E \in S^{(k)}} \max_{u \in E} 
		\frac{\| u \|^2_\V}{\| u \|^2_{L^2(\W)}}
	\]
	where $S^{(k)}$ denotes the set of all $k-$dimensional subspaces of 
	$\V.$

The first eigenvalue $\lambda^{(1)}$ is simple (see, for example, \cite{MR3002745}). We now state some regularity properties of the eigenfunctions.

\subsection{Regularity results} \label{ss:regularity}
	Given a function $f \in H^r(\W)$ ($r \ge -s$), let us consider the homogeneous Dirichlet problem for the fractional
	Laplacian,		
		\begin{equation}\label{eq:fuente}
			\left\lbrace \begin{array}{rl}
				(-\Delta)^s u = f  &\mbox{ in }\W, \\
				u = 0 &\mbox{ in }\W^c .
			\end{array} \right.	
		\end{equation}
	Existence and uniqueness of a weak solution $u \in \widetilde{H}^s(\W)$ of the above equation is an immediate consequence of the Lax-Milgram lemma. 
	We are interested in regularity estimates for such solution in standard and graded Sobolev spaces. In \cite{AcostaBorthagaray}, these are obtained in terms of the H\"older regularity of the data.

\begin{proposition}[See \cite{AcostaBorthagaray}] \label{prop:ab}
Let $\W$ be a Lipschitz domain satisfying the exterior ball condition and consider $\beta = \nicefrac12 - s$ if $s<\nicefrac12$ or $\beta>0$ if $s\ge\nicefrac12$. Then, if $f\in C^{\beta}(\W)$ for every $\eps >0$, the solution $u$ of \eqref{eq:fuente} belongs to $\widetilde H^{s+\frac{1}{2}-\eps}(\W)$, with
$$\|u\|_{\widetilde H^{s+\frac{1}{2}-\eps}(\W)} \leq \frac{C(\W,s,n)}{\eps} \, \|f\|_{C^{\beta}(\W)}.$$

Moreover, if $s > \nicefrac12$ and $f \in C^{1-s}(\W)$, then for every $\eps >0$, it holds that $u \in H^{s+1-2\eps}_{\frac12 - \eps}(\W)$, with
$$\|u\|_{H^{s+1-2\eps}_{1/2 - \eps}(\W)} \leq \frac{C(\W,s,n)}{\eps} \, \|f\|_{C^{1-s}(\W)}.$$
	\end{proposition}

On the other hand, smoothness of eigenfunctions is deduced from the regularity theory for the fractional Laplacian.
See \cite{MR2494809, RosOtonSerra, Servadei20132445, MR3161511, MR2270163}.

\begin{proposition}\label{prop:cinfauto}
If $\W$ is a Lipschitz domain satisfying 
	the exterior ball condition then any solution of \eqref{eq:int.autovalores} is in $C^{\infty}(\Omega)\cap L^\infty(\Omega)$.
\end{proposition}

Sobolev regularity of eigenfunctions is a consequence of the two previous propositions.
\begin{proof}[Proof of Proposition \ref{prop:regintro}]
Since $u \in C^\infty(\W)$ (cf. Proposition \ref{prop:cinfauto}), the claim follows easily applying Proposition \ref{prop:ab}. 
	\end{proof}
	
	Following Grubb \cite{Grubb}, it is also possible to obtain Sobolev regularity results for the solution to \eqref{eq:fuente} in terms of Sobolev regularity of the right hand side. In that paper, the author deals with	H\"ormander $\mu-$spaces $H^{\mu(s)}_p$; see that work for a definition and
	further details. The following result is a particular case of Theorem 7.1 therein:
	\begin{theorem}\label{teo:Grubb}
		Let $\W$ be a smooth domain, $\ell>s-\nicefrac12$ and assume 
		$u \in \widetilde{H}^{\sigma}(\W)$ for some $\sigma>s-\nicefrac12$ and consider a right hand side function
		$f\in H^{\ell-2s} (\W)$. Then, it holds that $u \in H^{s(\ell)}(\W)$.
	\end{theorem}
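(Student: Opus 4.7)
Since the statement is announced as a particular case of Theorem 7.1 of \cite{Grubb}, the plan is to exhibit the reduction rather than to re-derive the regularity theory. First I would verify that the hypotheses of Grubb's general theorem are met by the homogeneous Dirichlet problem \eqref{eq:fuente}: the fractional Laplacian $(-\Delta)^s$ is a classical, elliptic, pseudodifferential operator of order $2s$ on $\R^n$, with principal symbol $|\xi|^{2s}$, and it satisfies the $s$-transmission condition (i.e.\ the $\mu$-transmission condition with $\mu = s$). This last property is the structural hypothesis Grubb requires in order to apply his regularity machinery to Dirichlet problems on a smooth bounded domain $\W$.

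Second, I would match parameters. Specializing to $p=2$ and $\mu = s$, the H\"ormander $\mu$-space $H^{\mu(\ell)}_p$ of \cite{Grubb} becomes the space denoted by $H^{s(\ell)}(\W)$ here. The thresholds $\ell > s - \nicefrac12$ and $\sigma > s - \nicefrac12$ are precisely those appearing in Grubb's statement; they guarantee that the trace and dual-trace obstructions that obstruct regularity below $s-\nicefrac12$ are absent. The a priori assumption $u \in \widetilde{H}^\sigma(\W)$ with $\sigma > s-\nicefrac12$ provides the minimal regularity that Grubb's theorem needs in order to start improving, and $f \in H^{\ell - 2s}(\W)$ is the correct right-hand side regularity to pair with a target space of smoothness index $\ell$ for an operator of order $2s$.

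Once these correspondences are established, the conclusion $u \in H^{s(\ell)}(\W)$ is immediate from \cite[Theorem 7.1]{Grubb}. The main obstacle is not analytic but notational: the $\mu$-space machinery of Grubb is designed to capture boundary behavior of the type $\mathrm{dist}(\cdot,\partial\W)^{s}$ that is invisible on the standard Sobolev scale, so one must be careful that the spaces appearing in the hypothesis (standard $\widetilde{H}^\sigma$ and $H^{\ell-2s}$) and in the conclusion ($H^{s(\ell)}$) are exactly the objects Grubb works with once his indices are specialized. Beyond this translation, no additional work is needed, which is why a citation is the natural form of the argument.
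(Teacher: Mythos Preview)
Your proposal is correct and matches the paper's approach exactly: the paper does not prove this statement at all but simply records it as a particular case of \cite[Theorem~7.1]{Grubb}, and your write-up just makes the specialization $p=2$, $\mu=s$ explicit. If anything, your discussion of why the thresholds $\ell,\sigma>s-\nicefrac12$ and the $s$-transmission condition are the relevant hypotheses is more detailed than what the paper offers.
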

 
	In particular, considering $\ell = r+2s$ in the previous theorem and taking into account that 
	\[ 
		H^{s(r+2s)}(\overline \W)
		\begin{cases}
				= \widetilde{H}^{2s+r}(\W)  &\mbox{ if } 0 < s+r < \nicefrac12, \\
				\subset  \widetilde{H}^{s+\nicefrac12-\eps}(\W) \ \forall \eps > 0, &\mbox{ if } 
				\nicefrac12 \le s+r < 1,
			\end{cases}	
	\]
	(see \cite[Theorem 5.4]{Grubb}), we obtain:

	\begin{proposition}\label{prop:regHr}
		Let $\W$ be a smooth domain, $f\in H^r(\Omega)$ for $r\geq -s$ and $u\in \widetilde{H}^s(\W)$ be the solution of the Dirichlet problem
		\eqref{eq:fuente}. Then, the following regularity estimate holds 
		$$
			|u|_{H^{s+\alpha}(\rn)} \leq C(n, \alpha) \|f\|_{H^r(\W)}.
		$$
		Here, $\alpha = s+r$ if $s+r < \nicefrac12$ and $\alpha = 
		\nicefrac12 - \eps$ if $s+r \ge \nicefrac12$, with $\eps > 0$ arbitrarily small.
	\end{proposition}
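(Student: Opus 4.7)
The plan is to invoke Theorem \ref{teo:Grubb} with an appropriate choice of $\ell$ and then translate the resulting $H^{s(\ell)}(\overline{\W})$ regularity into ordinary fractional Sobolev regularity using the embeddings recorded just above the statement.

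First, by Lax-Milgram we already have $u \in \widetilde{H}^s(\W)$ with $\|u\|_\V \le C \|f\|_{H^{-s}(\W)}$, so the hypothesis ``$u \in \widetilde{H}^\sigma(\W)$ for some $\sigma > s - \nicefrac12$'' in Theorem \ref{teo:Grubb} is verified with $\sigma = s$. Next, I would set $\ell = r + 2s$ in Theorem \ref{teo:Grubb}; the requirement $\ell > s - \nicefrac12$ translates to $r > -s - \nicefrac12$, which is guaranteed by the standing assumption $r \ge -s$. Since $f \in H^{\ell - 2s}(\W) = H^r(\W)$, Theorem \ref{teo:Grubb} yields $u \in H^{s(\ell)}(\overline{\W})$.

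Now I would split into the two ranges of $s + r$ using the embeddings quoted from \cite[Theorem 5.4]{Grubb}:
\begin{itemize}
\item If $0 < s+r < \nicefrac12$, then $H^{s(r+2s)}(\overline{\W}) = \widetilde{H}^{2s+r}(\W)$, so that $u \in \widetilde{H}^{s+\alpha}(\W)$ with $\alpha = s+r$.
\item If $\nicefrac12 \le s+r < 1$, the embedding $H^{s(r+2s)}(\overline{\W}) \subset \widetilde{H}^{s+\nicefrac12-\eps}(\W)$ yields $u \in \widetilde{H}^{s+\alpha}(\W)$ with $\alpha = \nicefrac12 - \eps$.
\item If $s+r \ge 1$, we simply observe that $H^r(\W) \hookrightarrow H^{r'}(\W)$ for any $r' \le r$; choosing $r' = 1 - s - \eps$ brings us into the previous case and produces the same $\alpha = \nicefrac12 - \eps$.
\end{itemize}
In each case the estimate $|u|_{H^{s+\alpha}(\rn)} \le C \|f\|_{H^r(\W)}$ can be obtained either from the quantitative version of Grubb's result or by an application of the closed graph theorem to the linear map $f \mapsto u$, which by uniqueness in \eqref{eq:fuente} is well defined from $H^r(\W)$ to $\widetilde{H}^{s+\alpha}(\W)$.

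The main subtlety I foresee is bookkeeping at the threshold $s+r = \nicefrac12$ and for $s+r \ge 1$, where one has to be careful that the $\eps$-loss is genuinely unavoidable and that the embedding statements from Grubb are applied within their stated range; the reduction via $r' = 1-s-\eps$ handles the latter cleanly. Apart from that, the argument is essentially a direct specialization of Grubb's regularity theorem combined with the interpretation of the $\mu$-space $H^{s(\ell)}(\overline{\W})$ in terms of $\widetilde{H}^{\cdot}(\W)$.
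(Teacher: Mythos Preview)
Your proposal is correct and follows exactly the route the paper takes: the paper derives the proposition by specializing Theorem~\ref{teo:Grubb} with $\ell = r+2s$ and then invoking the identification/embedding of $H^{s(r+2s)}(\overline\W)$ into $\widetilde H^{2s+r}(\W)$ or $\widetilde H^{s+\nicefrac12-\eps}(\W)$ from \cite[Theorem~5.4]{Grubb}. Your write-up is in fact more explicit than the paper's, since you spell out the verification of the hypothesis $\sigma>s-\nicefrac12$, treat the range $s+r\ge 1$ by reduction, and note how the quantitative bound follows (e.g.\ via the closed graph theorem).
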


	\begin{remark}\label{remark:bdry}
		Assuming further Sobolev regularity in the right hand side function does not imply that the solution will 
		be any smoother than what is given by the previous proposition. Indeed, if $f \in H^r(\W)$, then 
		Theorem \ref{teo:Grubb} gives $u \in H^{s(r+2s)}(\W)$, which can not be embedded 
		in any space sharper than $H^{s+\nicefrac12-\eps}(\W)$ if $r+s\ge \nicefrac12$.

		Moreover, other regularity estimates for eigenfunctions of the fractional Laplacian in smooth domains are derived in 
		\cite{Grubb_autovalores,MR3294242}. 
		These estimates are formulated in terms of H\"older norms. Letting $d$ be a smooth function that behaves like
		$\text{dist}(x,\partial \W)$ near the boundary of $\W$, it is shown that any eigenfunction $u$ of 
		\eqref{eq:int.autovalores} lies in the space $d^sC^{2s(-\eps)}(\overline \W)$, where the $\eps$ is active only if 
		$s=\nicefrac12$ and that $\nicefrac{u}{d^s}$ does not vanish near $\partial\W$. 
		This shows that no further regularity than $H^{s+\nicefrac12-\eps}(\W)$ should be expected for eigenfunctions. 
	\end{remark}

\subsection{Finite element approximations} \label{ss:fe}

	Let $\T_h$ be a family of shape-regular triangulations  of $\W$ (see introduction).
	Observe that for all $h>0$ the discrete space $\V_h$ is a subset of the continuous space.

	We have the analogue  min-max characterization for eigenvalues of the discrete problem, 
	\[
		\lambda_h^{(k)} = \min_{E \in S_h^{(k)}} \max_{u \in E} 
		\frac{\| u \|^2_\V}{\| u \|^2_{L^2(\W)}},
	\]
	where $S_h^{(k)}$ denotes the set of all
	$k$ dimensional subspaces of $\V_h.$

	\medskip

	\begin{remark}\label{re:compara}
		It follows from $\V_h \subset \V$ that
		\[
			\lambda^{(k)}\leq \lambda ^{(k)}_h .
		\] 
	\end{remark}

The second part of Proposition \ref{prop:regintro} is exploited by using finite element approximations on adequately graded meshes. This idea is standard, in problems with corner singularities or to cope with boundary layers arising in convection-dominated problems.
The following construction of graded meshes is based on \cite[Section 8.4]{Grisvard}. We assume that in addition to being shape-regular, our sequence of meshes enjoys satisfies the following graded hypotheses. First, we pick an arbitrary mesh size parameter $h>0$ and define, for $\varepsilon$ small enough, a number $1\le \mu$. 
Then,  we assume that for any $T\in \mathcal{T}_h$,
\begin{equation} \label{eq:H} \tag{H}
 \begin{array}{ll}
\mbox{if } T\cap \partial \W\neq \emptyset, &
 \mbox{then } h_T\le C(\sigma) h^{\mu}; \\
\mbox{otherwise, } & h_T\le C(\sigma) h \,\mbox{dist}(T,\partial \Omega)^{(\mu-1)/\mu}. 
 \end{array}
 \end{equation} 
Constructing graded meshes as above, finite element approximations to solutions of \eqref{eq:fuente} were proved to deliver an enhanced order of convergence (see \cite{AcostaBorthagaray}).

\medskip

Lastly, we want to mention that in the following sections we will consider  a quasi-interpolation operator
	$I_h\colon H^l(\W) \to \V_h $ satisfying the following estimate: there exists $C>0$ such that 
	for any $w\in H^{l}(\W)$,
	\begin{equation}
		\label{estiminterpol}
		\|w - I_h w\|_{\V} \le C h^{l-s}  |w|_{H^{l}(\W)} .
	\end{equation}
	Quasi-interpolation operators were introduced in \cite{clement} (see also \cite{ScottZhang}), and an estimate like 
	\eqref{estiminterpol} is derived, for example, in \cite{AcostaBorthagaray}.

\section{Convergence of eigenvalues and eigenfunctions in gap distance}
\label{sec.conv}

	In this section we prove Theorem \ref{teo:gap}, stating that the discrete eigenvalue problem 
	\eqref{eq:int.autovalores.di.intro}  converges to the continuous
	\eqref{eq:int.autovalores.de.intro} in the gap distance (recall the definition of convergence
	given in the introduction). We only assume that the domain $\W$ satisfies \eqref{eq:extension}, so that 
	the embedding $\V \hookrightarrow L^2(\rn)$ is compact (cf. Proposition \ref{prop:compact}).

	Let us start by defining the solution operators of the continuous 
	and discrete problems, $T:L^2(\Omega)\to \V$ and 
	$T_h:L^2(\Omega)\to \V_h$. Given  $f\in L^2(\Omega)$, 
	we define $Tf\in \V$ as the  unique solution of 
	\begin{equation}\label{ecuT} 
		\llangle Tf,v \rrangle =( f, v) \quad \forall v\in \V,
	\end{equation}
 	and    $T_h f\in \V_h$ as the  unique solution of 
 	\[
 		\llangle T_h f,v_h \rrangle =(f,v_h) \quad \forall v_h\in \V_h. 
	\]
	Observe that if $(u,\lambda)$ is an eigenpair, then $T(\lambda u) = u$ and $T_h (\lambda u) = \Pi_h u$. 
	
	To prove Theorem \ref{teo:gap}, by \cite[Proposition 7.4 and Remark 7.5]{Boffi}, we only need to show that
	the operators $T$ and $T_h$ are compact and
	\[
		\| T- T_h\|_{\mathcal{L}(L^2(\Omega),\V)} \to 0 \mbox{ as } h \to 0.
	\]
	
	\medskip 
	
	\begin{lemma}\label{compacto}
		The operators $T$ and $T_h$ are compact.
	\end{lemma}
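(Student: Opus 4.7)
The plan is to reduce compactness of each of the two operators to a standard Rellich-type compact embedding, after first establishing the basic mapping properties from the weak formulation.

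First I would verify that $T$ and $T_h$ are bounded operators into $\V$. Taking $v=Tf$ in the defining identity \eqref{ecuT}, using Cauchy--Schwarz and the Poincar\'e inequality \eqref{eq:poincare} gives
\[
\|Tf\|_\V^2 = (f,Tf) \le \|f\|_{L^2(\W)}\|Tf\|_{L^2(\W)} \le c\,\|f\|_{L^2(\W)}\|Tf\|_\V ,
\]
so $\|Tf\|_\V \le c\|f\|_{L^2(\W)}$; the identical computation with $v_h=T_hf$ yields $\|T_hf\|_\V\le c\|f\|_{L^2(\W)}$, with the same constant since $\V_h\subset\V$ and \eqref{eq:poincare} holds on all of $\V$.

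Compactness of $T_h$ is then immediate: its range is contained in the finite-dimensional subspace $\V_h\subset\V$, and a bounded linear operator with finite-dimensional range is compact. For $T$, I would upgrade the bound above using the regularity theory already established. Given $f\in L^2(\W)$, the function $u=Tf$ solves \eqref{eq:fuente} with right-hand side $f\in H^0(\W)$, so Proposition \ref{prop:regHr} (with $r=0$) yields
\[
|Tf|_{H^{s+\alpha}(\R^n)} \le C\|f\|_{L^2(\W)},
\]
where $\alpha=s$ if $s<\nicefrac12$ and $\alpha=\nicefrac12-\eps$ otherwise; in either case $\alpha>0$. Since $\mathrm{supp}\,Tf\subset\overline{\W}$, this shows $T$ is bounded from $L^2(\W)$ into $\widetilde H^{s+\alpha}(\W)$.

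It remains to invoke the compact embedding
\[
\widetilde H^{s+\alpha}(\W) \hookrightarrow \widetilde H^s(\W)=\V .
\]
Because all functions involved are supported in the bounded set $\overline{\W}$, this reduces to a standard Rellich--Kondrachov compactness result for fractional Sobolev spaces (for instance, a bounded sequence in $\widetilde H^{s+\alpha}(\W)$ is bounded in $H^{s+\alpha}(B)$ for a fixed ball $B\supset\W$, hence precompact in $H^s(B)$, and the uniform support property lets one identify the limit as an element of $\widetilde H^s(\W)$). Composing this compact embedding with the bounded map $T\colon L^2(\W)\to\widetilde H^{s+\alpha}(\W)$ gives compactness of $T\colon L^2(\W)\to\V$.

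The only step with any substance is the intermediate regularity estimate for $T$, and that has already been done for us in Proposition \ref{prop:regHr}; the compact embedding of $\widetilde H^{s+\alpha}$ into $\widetilde H^s$ is classical. I do not anticipate genuine obstacles, but I would be careful to cite the appropriate Rellich-type statement for the spaces $\widetilde H^\sigma(\W)$ rather than for $H^\sigma(\W)$, since the compactly supported structure is what makes the embedding into $H^s(\R^n)$ (and not just into $H^s(\W)$) compact.
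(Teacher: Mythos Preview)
Your argument is correct, but it follows a genuinely different route from the paper. For $T_h$ you observe that the range is finite-dimensional, which is cleaner than the paper's ``the proof for $T_h$ is similar''. For $T$, you factor the operator as $L^2(\W)\to\widetilde H^{s+\alpha}(\W)\hookrightarrow\widetilde H^s(\W)=\V$, invoking Proposition~\ref{prop:regHr} for the first arrow and a Rellich-type compact embedding for the second. The paper instead uses only the compact embedding $\V\hookrightarrow L^2(\R^n)$: starting from a weakly convergent sequence $f_k\rightharpoonup f$ in $L^2(\W)$, it shows $Tf_k\rightharpoonup Tf$ weakly in $\V$ and $Tf_k\to Tf$ strongly in $L^2$, and then upgrades to strong $\V$-convergence via the energy identity $\|Tf_k\|_\V^2=(f_k,Tf_k)\to(f,Tf)=\|Tf\|_\V^2$ together with uniform convexity of $\V$.

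The trade-off is worth noting. Your proof is shorter and more conceptual, but it relies on Proposition~\ref{prop:regHr}, which in turn rests on Grubb's theorem and hence needs $\W$ to be smooth. The paper's argument uses no regularity theory whatsoever---only the Rellich embedding $\widetilde H^s(\W)\hookrightarrow L^2$---and therefore applies to arbitrary bounded domains. The authors explicitly make this point in the remark following Proposition~\ref{prop:regintro}: ``the theory of Section~\ref{sec.conv} does not necessitate the smoothness of $\W$''. Your approach would forfeit that generality. If you want to keep your structure but avoid this dependence, you could replace the use of Proposition~\ref{prop:regHr} by the observation that $T$ is bounded from $H^{-s}(\W)=\V'$ to $\V$ (directly from Lax--Milgram) and then use the compact embedding $L^2(\W)\hookrightarrow H^{-s}(\W)$ at the source rather than extra regularity at the target.
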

	\begin{proof}
		Let $\{f_k\}_{k\in\mathbb{N}}$ 
		be a bounded sequence in $L^2(\Omega).$  
		Then, there exists a subsequence of  $\{f_k\}_{k\in\mathbb{N}}$ 
		(still denoted by $\{f_k\}$) and $f\in L^2(\Omega)$ such that  
		$f_k \rightharpoonup f$ weakly in $L^2(\Omega).$ 
		Taking $v=T f_k$ in \eqref{ecuT},  we get
		\[ 
			\|T f_k\|^2_{\V}=(f_k, T f_k)\leq C \|T f_k\|_{L^2(\Omega)}
			\quad\forall k\in\mathbb{N}.
		\]
		Therefore, by Poincar\'e inequality \eqref{eq:poincare},  we have that 
		$\{T f_k\}_{k\in\mathbb{N}}$ is bounded in $\V$. 
		Thus, there exists a subsequence of  $\{f_k\}_{k\in\mathbb{N}}$
		(still denoted by $\{f_k\}_{k\in\mathbb{N}}$) and $u\in\V$
		such that  $T f_k \rightharpoonup u$ weakly in $\V.$ Hence
		\[
			\llangle u,v \rrangle =\lim_{k\to\infty} \llangle Tf_k,v \rrangle =\lim_{k\to\infty}( f_k, v)
			=( f, v)\quad\forall v\in\V,
		\]
		that is, $u=Tf.$
		
		On the other hand, since the inclusion $\V\hookrightarrow L^2(\R^n)$ is compact,
		passing if necessary to a subsequence we may assume
		\begin{align*}
			T f_k \rightharpoonup Tf &\mbox{ weakly in } \V,\\
			T f_k \rightarrow Tf & \mbox{ strongly in } L^2(\R^n).
		\end{align*}
		Then,
		\[ 
			\|T f_k\|_{\V}^2=(f_k, T f_k) \to (f, T f)=\|T f\|^2_{\V}
		\]
		as $n\to\infty.$ Since the space $\V$ is uniformly convex, it follows that
		\[
			T f_k \rightarrow Tf \mbox{ strongly in } \V.
		\]
		
		For the operators $T_h$ the result follows since
	 they are  finite-rank operators. 
	\end{proof}
 
	 \begin{lemma}\label{convop} 
		The following norm convergence holds true:
		\[
			\| T- T_h\|_{\mathcal{L}(L^2(\Omega),\V)} \to 0 \mbox{ as } h \to 0.
		\]
 	\end{lemma}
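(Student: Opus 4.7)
The plan is to reduce the operator-norm estimate to a pointwise approximation estimate that can be bounded uniformly in $f$ by combining regularity of $Tf$ (via Proposition \ref{prop:regHr}) with the interpolation estimate \eqref{estiminterpol}.

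First I would identify $T_h$ with the Galerkin projection of $T$. Fix $f\in L^2(\W)$ and observe that, by the definitions of $T$ and $T_h$, for every $v_h\in\V_h$
\[
   \llangle T f - T_h f,\, v_h \rrangle = (f,v_h) - (f,v_h) = 0,
\]
so $T_h f = \Pi_h(Tf)$. Consequently, by the best-approximation identity \eqref{eq:cea},
\[
  \|Tf - T_h f\|_{\V} \;=\; \inf_{v_h\in\V_h} \|Tf - v_h\|_{\V} \;\le\; \|Tf - I_h(Tf)\|_{\V},
\]
where $I_h$ is the quasi-interpolation operator of \eqref{estiminterpol}.

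Next I would invoke the source-problem regularity. Since $f\in L^2(\W)=H^0(\W)$, Proposition \ref{prop:regHr} applied with $r=0$ yields $Tf \in \widetilde{H}^{s+\alpha}(\W)$ together with the bound
\[
  |Tf|_{H^{s+\alpha}(\R^n)} \le C\,\|f\|_{L^2(\W)},
\]
where $\alpha=s$ if $s<\nicefrac12$ and $\alpha=\nicefrac12-\eps$ if $s\ge\nicefrac12$. Taking $l=s+\alpha$ in \eqref{estiminterpol} and chaining the previous inequalities gives
\[
  \|Tf - T_h f\|_{\V} \;\le\; C\,h^{\alpha}\,|Tf|_{H^{s+\alpha}(\W)} \;\le\; C\,h^{\alpha}\,\|f\|_{L^2(\W)},
\]
with $C$ independent of $h$ and $f$. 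Taking the supremum over $\|f\|_{L^2(\W)}\le 1$ produces
\[
  \|T - T_h\|_{\mathcal{L}(L^2(\W),\V)} \le C\,h^{\alpha} \longrightarrow 0 \quad\text{as } h\to 0,
\]
which is the claim.

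The only delicate point is the usage of Proposition \ref{prop:regHr} for every $f\in L^2(\W)$ uniformly, i.e.\ ensuring the constant $C$ in the regularity estimate does not depend on $f$; this is built into the proposition (it is the standard shift estimate for the fractional Dirichlet problem) and requires no further work. The rest of the argument is routine once the identification $T_h = \Pi_h\circ T$ has been made and the correct Sobolev index $s+\alpha$ is selected.
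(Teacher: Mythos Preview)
Your argument is correct, but it follows a genuinely different route from the paper's. You identify $T_hf=\Pi_h(Tf)$, then combine the regularity shift of Proposition~\ref{prop:regHr} (with $r=0$) with the interpolation estimate \eqref{estiminterpol} to obtain the quantitative bound $\|T-T_h\|_{\mathcal{L}(L^2,\V)}\le Ch^{\alpha}$. The paper instead runs a pure compactness argument: it picks near-maximizers $f_h$, extracts weak limits, identifies the limit via density, and uses the compact embedding $\V\hookrightarrow L^2$ together with the identity $\|Tf_h-T_hf_h\|_\V^2=(f_h,Tf_h-T_hf_h)$ to upgrade weak to strong convergence.

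What each approach buys: your proof is shorter and even yields an explicit rate, but it relies on Proposition~\ref{prop:regHr}, which in turn rests on Grubb's theorem and hence on smoothness of $\partial\W$. The paper's compactness proof avoids this entirely; as noted in the Remark following Proposition~\ref{prop:regintro}, the authors deliberately arrange Section~\ref{sec.conv} so that it does \emph{not} require $\W$ to be smooth. So while your argument is perfectly valid under the paper's standing hypothesis that $\W$ is smooth, it loses the extra generality that the paper's proof of this particular lemma is designed to preserve.
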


	\begin{proof}
 		For each $h$, take $f_h\in L^2(\Omega) $ 
		such that $\|f_h\|_{L^2(\Omega)}=1$ and
		\[
			\sup_{\|f\|_{L^2(\Omega)}=1} \| T f - T_h f\|_{\V}
			= \| T f_h - T_h f_h\|_{\V}.
		\]
		
		Then, to prove the result, it is enough to show that for any sequence $h_k\to0$
		there is a subsequence $\{h_{k_j}\}_{j\in\N}$ such that 
		\[
			\| T f_{h_{k_j}} - T_{h_{k_j}} f_{h_{k_j}}\|_{\V}\to 0 \quad\text{ as } j\to\infty.
		\]
		
		Let $\{h_k\}_{k\in\N}$ be a sequence sucht that $h_{k}\to0.$
 		It follows from $\|f_{h_k}\|_{L^2(\Omega)}=1$ for all $k\in\N$ that  
 		there exist a subsequence $\{f_{h_{k_j}}\}_{j\in\mathbb{N}}$ 
		of $\{f_{h_k}\}_{k\in\N}$ and $f\in L^2(\Omega)$ such that  
		$ f_ {h_{k_j}}  \rightharpoonup  f$ weakly in $L^2(\Omega)$. 
		Proceeding as in the proof of Lemma \ref{compacto} 
		(and passing if necessary to a subsequence), we may assume
 		\begin{align*}
			T_{h_{k_j}} f_{h_{k_j}} \rightharpoonup v &\mbox{ weakly in } \V, \\
			T_{h_{k_j}} f_{h_{k_j}} \rightarrow v &\mbox{ strongly in } L^2(\R^n).
		\end{align*}
		
		On the other hand, it follows from \eqref{estiminterpol} that 
		\begin{align*}
			I_h \varphi  \rightarrow \varphi &\mbox{ strongly in } \V, \\
			I_h \varphi  \rightarrow \varphi &\mbox{ strongly in }  L^2(\R^n),
		\end{align*}
		for any $\varphi\in C^{\infty}_0(\Omega).$ Therefore,  
		\[
			\llangle v,\varphi \rrangle =\lim_{j\to \infty} \llangle T_{h_{k_j}} f_{h_{k_j}}, 
				I_{h_{k_j}} \varphi \rrangle =\lim_{j \to \infty }
				( f_{h_{k_j}} ,I_{h_{k_j}} \varphi)
				=( f , \varphi)\quad\forall\varphi\in C^{\infty}_0(\Omega),
		\] 
		which means that $v=T f$. 
		Then,
		\[
			\| T f_{h_{k_j}} - T_{h_{k_j}} f_{h_{k_j}} \|^2_{\V}=( f_{h_{k_j}}, T f_{h_{k_j}} - T_{h_{k_j}} f_{h_{k_j}}) \to 0.
		\]
	 \end{proof}
	Now we conclude the convergence of the discrete eigenvalue problem to the continuous in the gap distance.

	\begin{proof}[Proof of Theorem \ref{teo:gap}]
	      The proof follows by  Lemmas \ref{compacto} and \ref{convop} 
	      and using Proposition 7.4 and Remark 7.5 of \cite{Boffi}.
	\end{proof}

\section{Order of convergence} \label{sec:orders}
	Assuming certain regularity on the domain $\W$, we are able to deduce orders 
	of convergence of the finite element approximations.
	This is attained as an application of the Babu\v ska-Osborn theory \cite{BO91};  
	an important tool in this regards is given by considering approximation properties of 
	$\Pi_h \colon \V \to \V_h$, the projection with respect to 
	the $\| \cdot \|_{\V}$ norm. To be specific, given $u \in \V$,  this is the only function in $\V_h$ such that the Galerkin orthogonality
	\[
			\llangle u - \Pi_h u , \, v_h \rrangle = 0 \quad\forall v_h \in \V_h
	\]
	holds, or equivalently, 
	\begin{equation}\label{eq:cea}
		\| u - \Pi_h u \|_{\V} = \inf_{v_h \in \V_h} \|u - v_h \|_{\V}.
	\end{equation}
	Observe that if $u$ is the solution of \eqref{eq:fuente}, 
	then $\Pi_h u$ corresponds to the solution of the corresponding
	discrete problem on $\V_h$.

	\begin{proposition}\label{estimacionlinf0}  
		Let $\W$ be a Lipschitz domain satisfying the exterior ball condition and
		$u$ be an eigenfunction of \eqref{eq:int.autovalores.de.intro}. Then, 
		for any $\varepsilon>0$ there exists a positive constant $C$ independent of $h$ 
		such that
		\begin{equation}
			\label{estimacionlinf1}
				\|u-\Pi_h u\|_{\V} \le C h^{\nicefrac12-\eps}.
		\end{equation}
Also, if $s > \nicefrac12$, constructing meshes according to grading hypothesis \eqref{eq:H} (setting the parameter $\mu$ equal to $2$), it holds that
		\begin{equation}
		\| u - \Pi_h u \|_\V  \leq  C h \sqrt{|\ln h|}. 
		\label{eq:conv_graduadas}
		\end{equation}
	\end{proposition}	
 	\begin{proof}
	
Upon considering estimate \eqref{eq:cea} and Proposition \ref{prop:regintro}, the proof follows as in \cite[Theorem 4.7]{AcostaBorthagaray}.
	\end{proof}

\begin{remark}
In view of the previous proposition, and applying the abstract theory from \cite{BO91} Theorem \ref{teo:ordenenergia} follows.
\end{remark} 
	
In the remainder of this section, we study convergence of discrete eigenfunctions in the $L^2$ norm. 
We first prove the $L^2$ convergence of the energy projection 
over the discrete space. Notice that smoothness of the domain is required
in order to apply Proposition \ref{prop:regHr}.

\begin{proposition}\label{propl2} Let $\W$ be a smooth domain and $u$
 be an eigenfunction of \eqref{eq:int.autovalores.de.intro}. Then, for any $\eps > 0$ there
is a positive constant $C$ independent of h such that
\begin{equation}\label{eq:aproxL2}
			\|u-\Pi_h u\|_{L^2(\W)} \le	C h^{\nicefrac12+\alpha-\eps}.
		\end{equation}
		Here, $\alpha = s$ if $s < \nicefrac12$ and $\alpha = \nicefrac12 - \eps$ if $s\ge 
		\nicefrac12.$
\end{proposition}
\begin{proof}
	We apply an Aubin--Nitsche duality argument. 	
		Let $w\in\V$ be the weak solution of the boundary value problem
			\[
			     \left\lbrace \begin{array}{cl}
				    (-\Delta)^sw=u-\Pi_h u &\mbox{ in }\Omega,\\
				    w=0&\mbox{ in }\Omega^c.\\
			      \end{array} \right.
			\]
		Then, resorting to Galerkin orthogonality again we obtain
			\[  		 
			      \|u-\Pi_h u\|_{L^2(\W)}^2 = 
			      \llangle  w, u-\Pi_h u \rrangle 
			      \leq \|w - I_h w\|_{\V} \|u-\Pi_h u\|_{\V},
			\]
		where $I_h w \in \V_h$ is the interpolator of
			$w.$
			 
		Taking into account the regularity given by Proposition \ref{prop:regHr} with $r=0$, interpolation estimate \eqref{estiminterpol} gives
			\[
			      \|w - I_h w\|_{\V} \le C h^{\alpha}  |w|_{H^{s+\alpha}(\W)} 
			      \le C  h^{\alpha} \|u-\Pi_h u\|_{L^2(\W)}.
			\]
		Finally, using the error estimate \eqref{estimacionlinf1} we obtain 
		\begin{equation}\label{estimacionlinf} 		 
			 \|u-\Pi_h u\|_{L^2(\W)}^2 \le C h^{\nicefrac12+\alpha-\eps}|u|_{H^{s+\nicefrac12-\varepsilon}(\W)}\|u-\Pi_h u\|_{L^2(\W)},
			 \end{equation}
		and then estimate \eqref{eq:aproxL2} follows.
\end{proof}

The proof of Theorem \ref{teo:ordenL2} follows as in \cite[Lemma 6.4-3]{RT83}, using Proposition \eqref{propl2}. We include a proof here for completeness. 

\begin{proof}[Proof of Theorem \ref{teo:ordenL2}]
		In first place, we assume that $m=1$ since the case $m>1$ is similar (see \cite{Boffi,RT83}). We  define
		$$
			\w_h^{(k)} \coloneqq \left( \Pi_h u^{(k)}, u_h^{(k)} \right) u_h^{(k)} ,
		$$
		and the quantity
		$$
			\rho_h^{(k)} \coloneqq \max_{i \neq k} \frac{\lambda^{(k)}}{|\lambda^{(k)} - \lambda_h^{(i)}|}.
		$$

		Then
		\begin{equation}
		    \begin{split}
			\|u^{(k)} - u_h^{(k)} \|_{L^2(\W)} 
			&\leq \|u^{(k)} - \Pi_h u^{(k)} \|_{L^2(\W)} \\
			&\quad
			+ \| \Pi_h u^{(k)} - \w_h^{(k)} \|_{L^2(\W)} + 
			\| \w_h^{(k)} - u_h^{(k)}\|_{L^2(\W)} .
			    \end{split}
			\label{eq:triangular}
		\end{equation}
		We are going to estimate the terms in the right hand side separately. 
		
		Given $\varepsilon>0,$ it follows from our regularity estimate \eqref{eq:aproxL2} that
		there exists $C>0$ independent of $h$ such that 
		\begin{equation}\label{eq:des1}
				\| u^{(k)} - \Pi_h u^{(k)} \|_{L^2(\W)} \leq C h^{\alpha+\nicefrac12-\eps} .
		\end{equation}
		Moreover, since 
		$$
			\left( \Pi_h u^{(k)}, u_h^{(i)} \right) = \frac{1}{\lambda_h^{(i)}} \llangle \Pi_h u^{(k)}, u_h^{(i)} \rrangle = 
			\frac{1}{\lambda_h^{(i)}} \llangle u^{(k)}, u_h^{(i)} \rrangle = \frac{\lambda^{(k)}}{\lambda_h^{(i)}} \left( u^{(k)}, 
			u_h^{(i)} \right), 
		$$
		we have
		\[
			\left| \left( \Pi_h u^{(k)}, u_h^{(i)} \right) \right| \leq \rho_h^{(k)} \left| \left( u^{(k)} - \Pi_h u^{(k)}, 
			u_h^{(i)} \right) \right|.
		\]

		So,  
		\begin{equation}\label{eq:des2}
			\begin{split}
				\| \Pi_h u^{(k)} - \w_h^{(k)} \|_{L^2(\W)}^2 
				& = \sum_{i\neq k} 
				\left( \Pi_h u^{(k)}, u_h^{(i)} \right)^2\\ 
				&\leq \left[\rho_h^{(k)}\right]^{2} 
				\sum_{i\neq k} 
				\left( u^{(k)} - \Pi_h u^{(k)}, 
				u_h^{(i)} \right)^2  \\
			    & \leq \left[\rho_h^{(k)}\right]^2 
			    \| u^{(k)} - \Pi_h u^{(k)} \|^2_{L^2(\W)} 
			    \leq C h^{\alpha+\nicefrac12 -\eps} .
			\end{split}
		\end{equation}

		Finally, let us show that 
		\begin{equation}
			\| \w_h^{(k)} - u_h^{(k)} \|_{L^2(\W)} \leq \| \w_h^{(k)} - u^{(k)} \|_{L^2(\W)} ,
			\label{eq:estimacion_proyeccion}
		\end{equation}
		so that
		$$ 
			\| \w_h^{(k)} - u_h^{(k)} \|_{L^2(\W)} \leq \| u^{(k)} - \Pi_h u^{(k)} \|_{L^2(\W)} + \| \Pi_h u^{(k)} - \w_h^{(k)} \|_{L^2(\W)} .
		$$
		Indeed, on one hand we have
		$$
			u_h^{(k)}-\w_h^{(k)} = \left[1 - \left(\Pi_h u^{(k)}, u_h^{(k)}\right)  \right] u_h^{(k)}.
		$$

		On the other hand, due to the normalizations $\|u^{(k)}\|_{L^2(\W)}=\|u_h^{(k)}\|_{L^2(\W)}=1,$ we have 
		$$
			\left| 1 - \|\w_h^{(k)} \|_{L^2(\W)} \right| \leq \|u^{(k)}- \w_h^{(k)} \|_{L^2(\W)}
		$$
		and
		$$
			\|\w_h^{(k)}\|_{L^2(\W)} = \left|\left(\Pi_h u^{(k)}, u_h^{(k)}\right)\right|.
		$$

		Therefore, if we choose the sign of $u_h^{(k)}$ in such a way that $\left(\Pi_h u^{(k)}, u_h^{(k)}\right) \ge 0$, 
		we deduce
		    \begin{align*}
	            \|u_h^{(k)} - \w_h^{(k)} \|_{L^2(\W)} 
	            &= 
	            \left|1 - \left(\Pi_h u^{(k)}, u_h^{(k)}\right)  \right| \\
	            &= 
			\left| 1 -  \left|\left(\Pi_h u^{(k)}, u_h^{(k)}\right)\right| \, \right|\\
			&\le 
			\|u^{(k)} - \w_h^{(k)} \|_{L^2(\W)},
            \end{align*}	 
		as stated in \eqref{eq:estimacion_proyeccion}.

		Thus, estimate \eqref{eq:convL2} is obtained by combining \eqref{eq:triangular}, \eqref{eq:des1}, \eqref{eq:des2} and \eqref{eq:estimacion_proyeccion}.

\end{proof}

\section{Numerical results}
\label{sec:numerico}
This section exhibits the outcome of a variety of experiments carried out by the authors in one- and two-dimensional domains.
Since in general no closed formula for the eigenvalues of the fractional Laplacian is available, we have estimated the order of convergence 
by means of a least-squares fitting of the model 
\[
\lambda^{(k)}_h = \lambda^{(k)} + Ch^\alpha .
\]
This allows to extrapolate approximations of the eigenvalues as well (in the tables we denote this extrapolated value of $\lambda^{(k)}$ as $\lambda_{ext}^{(k)}$).

Throughout this section, the results are compared with those available in the literature. In first place we consider one-dimensional problems, which have been widely studied both theoretically and from the numerical point of view. Next, we show some examples in two-dimensional domains: the unit ball, a square and an $L$-shaped domain.
As for the ball, the deep results of \cite{DydaKuznetsovKwasnicki} allow to obtain sharp estimates on the eigenvalues, and thus provide a point of comparison for the validity of the FE implementation. Regarding the square, some estimates for the eigenvalues are found in \cite{Kwasnicki}. The main interest of the $L$-shaped domain is that, although it does not satisfy the ``standard'' requirements to regularity of eigenfunctions to hold, the numerical order of convergence is the same as in problems posed on smooth, convex domains.
{Finally, Subsection \ref{sub:higher} is concerned with the computation of higher-order eigenspaces.}

General estimates for eigenvalues, valid for a class of domains, are obtained by Chen and Song \cite{ChenSong}. In that paper, the authors state upper and lower bounds for eigenvalues of the fractional Laplacian on domains satisfying the exterior cone condition. Calling $\mu^{(k)}$ the $k$-th eigenvalue of the Laplacian with Dirichlet boundary conditions {on the} domain $\W$, they prove that there exists a constant $C=C(\W)$ such that
\begin{equation} \label{eq:chensong}
C \left(\mu^{(k)}\right)^s \le \lambda^{(k)} \le \left(\mu^{(k)}\right)^s .
\end{equation}
If $\W$ is a bounded convex domain, then $C$ can be taken as $\nicefrac12$. It is noteworthy that, due to the scaling property of the fractional Laplacian, eigenvalues for the dilations of a domain $\W$ are obtained by means of
 $\lambda^{(k)}(\gamma\W) = \gamma^{-2s} \lambda^{(k)}(\W).$

Since we are working with conforming methods, as well as providing approximations, the discrete eigenvalues yield upper bounds for the corresponding continuous eigenvalues. This is of special interest in those cases in which theoretical estimates are not sharp, or the non-symmetry of the domain precludes the possibility of developing arguments such as the ones in \cite{DydaKuznetsovKwasnicki}.

\subsection{One-dimensional intervals} \label{sub:1d}
Eigenvalues for the fractional Laplacian in intervals have been studied by other authors previously. 
In \cite{ZoiaRossoKardar}, a discretized model of the fractional Laplacian is developed, and a numerical study of 
eigenfunctions and eigenvalues is implemented for different boundary conditions. 
In \cite{MR2679702}, the authors deal with one dimensional problems for $s=\nicefrac12$, and provide asymptotic expansion for eigenvalues.
Later, Kwa{\'s}nicki \cite{Kwasnicki} extended this work to the whole range $s\in (0,1)$. Namely, he showed the following identity 
for the $k$-th eigenvalue in the interval $(-1,1)$:
\begin{equation}
\lambda^{(k)} = \left( \frac{k\pi}{2} - \frac{(1-s)\pi}{4}\right)^{2s} + \frac{1-s}{\sqrt{s}} \, O\left(k^{-1}\right) .
\label{eq:kwasnicki}
\end{equation}
Moreover, in that work a method to obtain lower bounds  in arbitrary bounded domains is developed, and it is proved that, on one spacial dimension,
 eigenvalues are simple if $s\ge \nicefrac12$.
As eigenvalues are simple and we are working in one dimension, it is not difficult to numerically estimate the order of convergence of eigenfunctions 
in the $L^2$-norm. Indeed, normalizing the discrete eigenfunctions so that $\| u_h^{(k)} \|_{L^2(-1,1)} = 1$ and choosing their sign adequately, 
these are then compared with a solution on a very fine grid.

On the other hand, in \cite{DuoZhang} it is performed a numerical study of the fractional Schr\"odinger equation in an infinite potential well 
in one spacial dimension. The authors find numerically the ground and first excited states and their corresponding eigenvalues for the stationary linear problem, 
which corresponds to the first two eigenpairs of our equation \eqref{eq:int.autovalores}.

In Table \ref{tab:ordenes1d}, our results for the first $2$ eigenpairs are displayed, {computed over a sequence of uniform meshes with $800$, $1600$, $3200$ and $6400$ elements}. The extrapolated numerical values 
are compared with the estimates from \cite{DuoZhang,Kwasnicki}; the orders of convergence are in good 
agreement with those predicted correspondingly by Theorems \ref{teo:ordenenergia} and \ref{teo:ordenL2}. 
{Moreover, we illustrate the sharpness of Proposition \ref{prop:regintro} by displaying in Figure \ref{fig:intervalo} the first two $L^2$-normalized eigenfunctions for $s=0.1$ and $s=0.9$. As predicted by Remark \ref{remark:bdry}, these functions are smooth within the interval, but behave as $d(x, \pp\W)^s$ near the boundary of the domain.}

\begin{figure}[ht]
	\centering
	\includegraphics[width=0.48\textwidth]{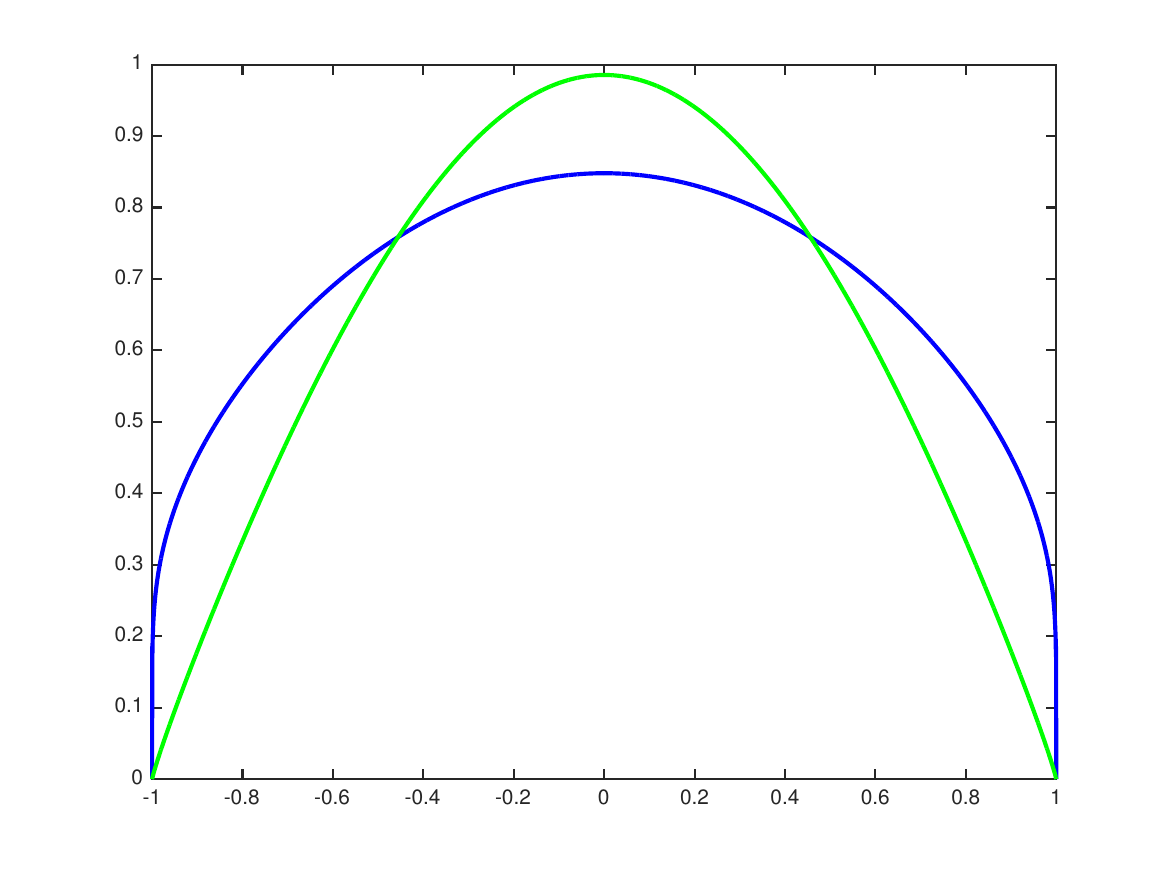} 
	\includegraphics[width=0.48\textwidth]{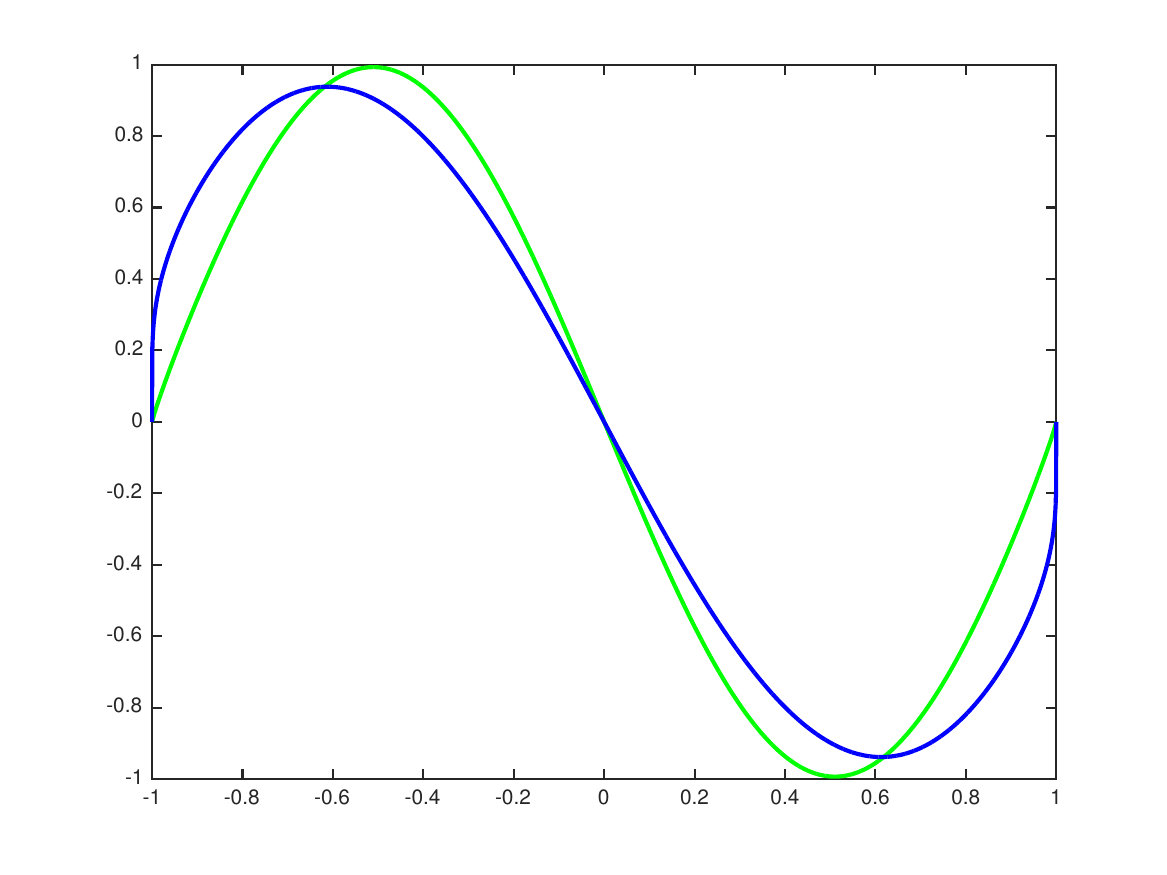} 
	\caption{First (left panel) and second (right panel) $L^2$-normalized discrete eigenfunctions in the interval $(-1,1)$. Color blue corresponds to $s=0.1$, while green corresponds to $s=0.9$.}	\label{fig:intervalo}
\end{figure}

\begin{table}
\caption{First $2$ eigenpairs in the interval $(-1,1)$. On the left, the extrapolated numerical values are compared with the results from \cite{DuoZhang} and with approximation \eqref{eq:kwasnicki}, obtained in \cite{Kwasnicki}. On the right, orders of convergence for eigenvalues and eigenfunctions in the $L^2$-norm (obtained by a least-square fitting) are displayed.}
\label{tab:ordenes1d}
\small{	
\begin{tabular}{ccccccccccc} 
\hline\noalign{\smallskip}
\multicolumn{1}{c}{}	& \multicolumn{6}{c}{Numerical values} & \multicolumn{4}{c}{Orders} \\
\noalign{\smallskip}\hline\noalign{\smallskip}
 $s$   & $\lambda_{ext}^{(1)}$ & $\lambda^{(1)}$ \cite{DuoZhang} & $\lambda^{(1)}$ \cite{Kwasnicki}  & $\lambda_{ext}^{(2)}$ & $\lambda^{(2)}$ \cite{DuoZhang} & $\lambda^{(2)}$ \cite{Kwasnicki}&  $\lambda^{(1)}$ & $\lambda^{(2)}$ & $u^{(1)}$ & $u^{(2)}$ \\ \hline
$0.05$&	$0.9726$								&	$0.9726$								&		$0.9809$								&				$1.0922$								&	$1.0922$								    &	$1.0913$										& $1.108$										  &    	$1.149$	 &     	$0.551$	   &  $0.568$  \\ 
$0.1$&			$0.9575$						& $0.9575$								&		$0.9712$								&			$1.1965$								&	$1.1966$									  &	$1.1948$																		  & $1.071$   		 & $1.102$     & $0.612$  & $0.625$   \\
$0.25$&	$0.9702$								& $0.9702$								&		$0.9908$								&				$1.6015$						&	$1.6016$									  &	$1.5977$										&		$1.021$								  &  $1.038$  		 &  $0.762$   		   & $0.782$   \\  
$0.5$&	$1.1577$								& $1.1578$									&	$1.1781$								&						$2.7548$				&	$2.7549$										&	$2.7488$																				  & $1.001$   		 & $0.979$		  	 & $0.961$  & $0.969$	  \\
$0.75$&	$1.5975$								& $1.5976$								&		$1.6114$								&								$5.0598$		&	$5.0600$									  &	$5.0545$										&			$0.998$							  & $0.999$   		 & $0.998$    		 &  $0.998$  \\ 
$0.9$&	$2.0487$								& ---					 						& $2.0555$									&		$7.5031$									&	---										&				$7.5003$							& $1.004$   		 & $1.021$    		 & $0.999$  & $0.999$   \\
$0.95$&	$2.2481$								& $2.2441$								&		$2.2477$								&		$8.5958$									&	$8.5959$									  &	$8.5942$										&		$1.035$									  & $1.142$   		 &  $0.999$   		   & $0.999$   \\ 
\noalign{\smallskip}\hline
\end{tabular}
}
\end{table}

\subsection{Two-dimensional experiments} \label{sub:2d}
The theoretical order of convergence for eigenvalues is also attained in the following examples in $\R^2$. The implementation for these experiments is based on the code from \cite{ABB}.

\subsubsection*{Unit ball} 
Let us consider the fractional eigenvalue problem on the two-dimensional unit ball. 
In \cite{DKKMeijer}, the weighted operator $u \mapsto (-\Delta)^s (\w^s \, u)$  is studied, where $\w^s (x) = (1-|x|^2)^s_+.$ In particular, explicit formulas for eigenvalues and eigenfunctions of this operator are established. 
Furthermore, in \cite{DydaKuznetsovKwasnicki} the same authors exploit these expressions to obtain two-sided bounds 
for the eigenvalues of the fractional Laplacian in the unit ball in any dimension.
This method provides sharp estimates; however, it depends on the decomposition of the fractional Laplacian as a weighted operator, and the weight $\w^s$ is only explicitly known for the unit ball.

In Table \ref{tab:ordenes_bola}, our results for the first eigenvalue are compared with those of \cite{DydaKuznetsovKwasnicki} for different values of $s$, {computed over a family of uniform meshes with mesh sizes $h \in \{1/30, 1/35, 1/40, 1/45, 1/50 \}$}. This comparison serves as a test for the validity of the code we are employing. As well as the extrapolated value of $\lambda^{(1)}$ and the numerical order of convergence, for every $s$ considered we exhibit an upper bound for the first eigenvalue. These outcomes are consistent with those from \cite{DydaKuznetsovKwasnicki} and the theoretical order of convergence given by Theorem \ref{teo:ordenenergia}.

Computations with graded meshes were carried out for this domain as well. The grading parameter $\mu$ in \eqref{eq:H} was set to be equal to $2$,  {and meshes were taken with about the same total of degrees of freedom as in the experiments with uniform meshes}. For a description of how to build these graded meshes, we refer the reader to Section 5.2 in \cite{AcostaBorthagaray}. In Table \ref{tab:graded_ball} we summarize our findings. For $s \ge\nicefrac12$, we estimated the order of convergence towards the first eigenvalue both with uniform and graded meshes, and also compared the extrapolated value of this eigenvalue. An increase in the convergence rate, in agreement with Theorem \ref{teo:ordenenergia}, is observed.

{
The grading parameter $\mu = 2$ is optimal for every $s$. Indeed, this parameter is in  correspondence with the weight in the regularity estimate from Proposition \ref{prop:regintro}. In this sense, the greater $\mu$ is, the greater the weight can be taken, and thus, the greater the differentiability order of solutions is. Therefore, increasing $\mu$ leads to an increment on the order of convergence with respect to the mesh size parameter. However, if $\mu > 2$ this effect is compensated by the growth in the number of degrees of freedom. We refer the reader to \cite{survey} for details.
}

\begin{table}
\caption{First eigenvalue in the unit ball in  $\R^2$. Estimate from \cite{DydaKuznetsovKwasnicki};  extrapolated value of $\lambda^{(1)}$; upper bound obtained by the finite element method with a meshsize $h\sim 0.02$; numerical order of convergence.} 
\label{tab:ordenes_bola}  
\begin{tabular}{ccccc}
\hline\noalign{\smallskip}
$s$    &  ${\lambda^{(1)}}$& $\lambda_{ext}^{(1)}$ & $\lambda_h^{(1)}$ (UB)	& Order	\\
\noalign{\smallskip}\hline\noalign{\smallskip}
$0.005$ &  $1.00475$	 			& $1.00475$							& $1.00480$	  						& $0.9462$	\\
		$0.05$  &  $1.05095$ 	 			& $1.05094$							& $1.05145$								& $0.9455$	\\
		$0.25$  &  $1.34373$	 			& $1.34367$							& $1.34626$								& $0.9497$	\\
		$0.5$   &  $2.00612$	 			& $2.00607$							& $2.01060$								& $0.9686$	\\
		$0.75$  &  $3.27594$	 			& $3.27632$							& $3.28043$								& $1.0092$	\\
\noalign{\smallskip}\hline
\end{tabular}
\end{table}

\begin{table}
\caption{Computational results in the unit ball in $\R^2$, for uniform and graded meshes. Orders of convergence are stated in terms of the mesh parameter $h$; this behaves like $N^{-1/2}$, $N$ being the number of nodes.} 
\label{tab:graded_ball}
\begin{tabular}{  c  c  c  c  c } \hline
$s $ & Order (unif.) &  Order (graded) & $\lambda^{(1)}_{ext}$ (unif.) & $\lambda^{(1)}_{ext}$ (graded) \\ \hline
$0.5$ & $0.9686$ & $2.1528$ & $2.0061$ & $2.0061$ \\
$0.6$ & $0.9808$ & $2.1720$ & $2.4165$ & $2.4165$ \\
$0.7$ & $0.9969$ & $2.1066$ & $2.9506$ & $2.9507$ \\
$0.8$ & $1.0348$ & $2.0497$ & $3.6494$ & $3.6498$ \\ 
$0.9$ & $1.1654$ & $2.0943$ & $4.5691$ & $4.5695$ \\ \hline
\end{tabular}
\end{table}
\medskip

\subsubsection*{Square} Eigenvalue estimates for the case in which the domain $\W$ is a square in $\R^2$ were also addressed in \cite{Kwasnicki}. 
However, in order to obtain upper bounds, the method proposed in that work depends on 
having pointwise bounds of the Green function for the fractional Laplacian on $\W$. 
The estimates from \cite{ChenSong,Kwasnicki} are compared with our results in Table \ref{tab:ordenes_cuadrado}, where numerical orders of convergence are also displayed. {The computations were carried over a sequence of unstructured uniform meshes with sizes $h \sim \{ 0.1, 0.08, 0.06, 0.04\}$. The upper bound displayed in Table \ref{tab:ordenes_cuadrado} corresponds to the computed result over the finest mesh in this sequence.}

\begin{table}
\caption{First eigenvalue in the square $[-1,1]^2$. Best lower (LB) and upper (UB) bounds known before; upper bound obtained by the finite element method with a meshsize $h\sim 0.04$; extrapolated value of $\lambda^{(1)}$; numerical order of convergence.}
\label{tab:ordenes_cuadrado}
\begin{tabular}{cccccc}
\hline\noalign{\smallskip}
$s$   &  $\lambda^{(1)}$ (LB)& $\lambda^{(1)}$ (UB)  & $\lambda_h^{(1)}$ (UB)& $\lambda_{ext}^{(1)}$ & Order		\\ 
\noalign{\smallskip}\hline\noalign{\smallskip}
$0.05$ &  $1.0308^b$	      	& $1.0831^a$		   			& $1.0412$			   			& $1.0405$			 				& $0.9229$ 	\\
		$0.1$  & 	$1.0506^b$	      	& $1.1731^a$		 			  & $1.0895$			   			& $1.0882$			 				& $0.9230$	\\
		$0.25$ &	$1.1587^b$	      	& $1.4905^a$		  			& $1.2844$			   			& $1.2813$			 				& $0.9283$	\\
		$0.5$  & 	$1.3844^b$	      	& $2.2214^a$		   			& $1.8395$			   			& $1.8344$			 				& $0.9622$	\\
		$0.75$ &	$1.6555^a$	      	& $3.3109^a$		   			& $2.8921$			   			& $2.8872$			 				& $0.9940$	\\
		$0.9$  & 	$2.1034^a$	     		& $4.2067^a$		   			& $3.9492$			   			& $3.9467$			 				& $1.0654$	\\
		$0.95$ & 	$2.2781^a$	     		& $4.5562^a$		   			& $4.4083$			   			& $4.4062$			 				& $1.1496$ \\
\noalign{\smallskip}\hline
\multicolumn{6}{l}{\footnotesize \, $^a$See \cite{ChenSong}. $^b$See \cite{Kwasnicki}.} \\
\end{tabular}
\end{table}

\medskip

\subsubsection*{$L$-shaped domain}
To the authors knowledge, there is no \emph{efficient} method to estimate eigenvalues of the fractional Laplacian if the domain $\W$ lacks symmetry.
The bound \eqref{eq:chensong} remains valid as long as $\W$ satisfies the assumptions required, but the range that estimate provides is quite wide. 

The main advantage of employing the finite element method is that it is flexible enough to cope with a variety of domains.
Moreover, as we are working with conforming approximations, sharp upper bounds for the eigenvalues may be obtained by considering 
discrete solutions on refined meshes.

In Proposition \ref{prop:regintro}, which states that eigenfunctions belong to  $\widetilde{H}^{s+\nicefrac12-\eps}(\W)$, it was assumed that the domain $\W$ satisifies the exterior ball condition. 
For the Laplacian, in order to prove regularity of solutions, it is customary to assume that $\W$ is either smooth or at least convex. In those cases, it is  well known that if $f \in H^{r}(\W)$ for some $r$, then the solutions of the Dirichlet problem with right hand side $f$ belong to $H^{r+2}(\W)$. However, if the domain has a re-entrant corner, solutions are less regular. This also applies to eigenfunctions: in the $L$-shaped domain $\W = [-1,1]^2\setminus [0,1]^2$, the first eigenvalue of the Laplacian is known not to belong to $H^{3/2}(\W)$.

Surprisingly, numerical evidence indicates that eigenvalues of the fractional Laplacian on this $L$-shaped domain converge with the same order as in the previous examples.
This motivates us to conjecture that eigenfunctions and solutions to the Dirichlet equation \eqref{eq:fuente} have the same Sobolev regularity than in smooth domains.

Our findings for the first eigenvalue, {computed over unstructured, uniform meshes with sizes $h \sim \{ 0.1,  0.08, 0.06, 0.04 \}$,} are summarized in Table \ref{tab:ordenes_L}.

\begin{table}
\caption{First eigenvalues in the $L-$shaped domain $[-1,1]^2\setminus [0,1]^2$. 
Upper bound obtained by the finite element method with a meshsize $h\sim 0.04$; extrapolated value of $\lambda^{(1)}$; numerical order of convergence.}
\label{tab:ordenes_L}  
\begin{tabular}{cccc}
\hline\noalign{\smallskip}
$s$  &  $\lambda_h^{(1)}$ (UB)& $\lambda_{ext}^{(1)}$ & Order $\lambda^{(1)}$  \\
\noalign{\smallskip}\hline\noalign{\smallskip}
$0.1$ &  $1.1434$		 					& $1.1413$							& $0.9085$		\\
$0.2$ &  $1.3386$		 					& $1.3342$							& $0.9103$		\\
$0.3$ &  $1.6025$		 					& $1.5956$							& $0.9160$		\\
$0.4$ &  $1.9593$		 					& $1.9499$							& $0.9267$		\\
$0.5$ &  $2.4440$		 					& $2.4322$							& $0.9459$		\\
$0.6$ &  $3.1072$		 					& $3.0936$							& $0.9812$		\\
$0.7$ &  $4.0228$		 					& $4.0069$							& $0.9822$		\\
$0.8$ &  $5.2994$		 					& $5.2831$							& $1.0609$		\\
$0.9$ &  $7.0975$							& $7.0790$							& $1.1891$		\\
\noalign{\smallskip}\hline
\end{tabular}
\end{table}


\subsection{Approximation of high-order eigenspaces}  \label{sub:higher}
In Theorems \ref{teo:ordenenergia} and \ref{teo:ordenL2}, we showed convergence rates of the discrete eigenpairs with respect to the meshsize. The constants involved in the convergence estimates depend on the domain, on $s$, the mesh regularity parameter $\sigma$ (cf. \eqref{eq:regularity.intro}), and importantly, on the eigenvalue number $k$. 

We refer to eigenvalues $\lambda_k$ corresponding to a `large' $k$ as {\em high-order} eigenvalues. We point out that high-order eigenvalues need not to be large in magnitude; it follows from \eqref{eq:chensong} that, for every $k$, $\lim_{s\to 0} \lambda^{(k)}_s = 1$, where $\lambda^{(k)}_s$ denotes the $k$-th eigenvalue of the fractional Laplacian of order $s$. Nevertheless, for a fixed discretization, the quality of the approximation of the $k$-th eigenspace deteriorates as $k$ grows; the discrete system cannot approximate more eigenvalues than the number of degrees of freedom, and the finer-scale oscillations corresponding to high-order eigenvalues cannot be well captured by a coarse mesh. 
A relevant question that arises is, for a fixed  $k$, how many degrees of freedom are needed to provide an approximation of the $k$-th eigenspace within a given tolerance. 

The examples considered in subsections \ref{sub:1d} and \ref{sub:2d} illustrate the order of convergence obtained in theory by examining the first eigenvalues.  Here, we provide numerical examples of the computation of higher-order eigenvalues. In Table \ref{tab:high_interval}, we compute the difference between the finite element approximation of $\lambda^{(100)}_h$ and the asymptotic estimate \eqref{eq:kwasnicki} in the interval $(-1,1)$. We observe that, independently of the value of $s$, the finite element solutions with $8000$ degrees of freedom offer approximations within a relative difference of about $5\times10^{-5}$ with respect to the asymptotic estimate. Since the relative error of the approximation \eqref{eq:kwasnicki} is of the order of $k^{-(1+2s)}$, we deduce that, roughly, the relative error of the finite element approximations of $\lambda^{(100)}$ have a relative error of the order of $10^{-(2+4s)}$ if $s \le 3/4$ and of the order of $10^{-5}$ if $s>3/4$.

\begin{table}
\caption{One-hundredth eigenvalue in the interval $(-1,1)$, computed over a uniform partition of the interval with $8000$ nodes. We compare our approximation with the asymptotic estimate \eqref{eq:kwasnicki}.} 
\label{tab:high_interval}  
\begin{tabular}{cccc}
\hline\noalign{\smallskip}
$s$    &  ${\lambda_h^{(100)}}$& $\lambda^{(100)}$ \cite{Kwasnicki} & Relative difference \\
\noalign{\smallskip}\hline\noalign{\smallskip}
$0.05$ & $1.65735$ & $1.65732$ & $2.3054\times10^{-5}$  \\
$0.1$  &  $2.74690$ & $2.74683$ & $2.6087\times10^{-5}$  \\
$0.25$ & $12.5100$ & $12.5096$ & $3.2075\times10^{-5}$  \\
$0.5$ &  $156.681$ & $156.687$ & $3.9854\times10^{-5}$  \\
$0.75$ & $1965.06$ & $1965.01$ & $2.3987\times10^{-5}$  \\
$0.9$  & $8966.95$ & $8966.54$ & $4.6387\times10^{-5}$  \\
$0.95$ & $14874.9$ & $14873.8$ & $7.4828\times10^{-5}$  \\
\noalign{\smallskip}\hline
\end{tabular}
\end{table}

Computation of high-order eigenvalues is also feasible in more complex geometries. In the experiments carried out over the $L$-shaped domain $ [-1,1]^2\setminus [0,1]^2$, it was observed that the eigenvalue $\lambda_h^{(91)}$ is simple, independently of $s$. 
Figure \ref{fig:L_shaped} (top) displays $L^2$-normalized eigenfunctions for $s = 0.05$  and $s = 0.95$ respectively, computed on a mesh with $11120$ elements ($h \sim 0.04$).
Even though these functions seem to have a similar qualitative behavior, an important difference becomes apparent when inspecting cross sections of these plots (cf. Figure \ref{fig:L_shaped} (bottom)): the eigenfunction corresponding to $s=0.05$ exhibits steeper gradients near the boundary of the domain. The boundary behavior predicted by the regularity theory discussed in Subsection \ref{ss:regularity} extends robustly to high-order eigenvalues.

\begin{figure}[ht]
	\centering
	\includegraphics[width=0.48\textwidth]{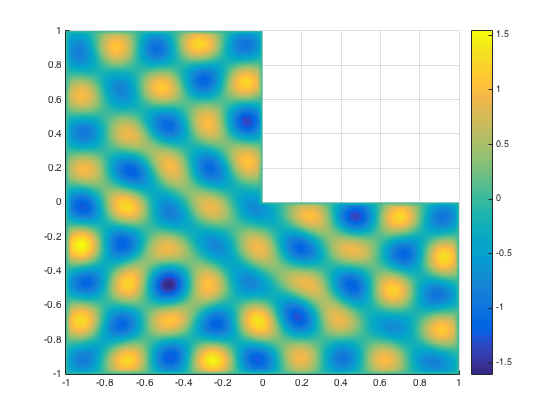}  
	\includegraphics[width=0.48\textwidth]{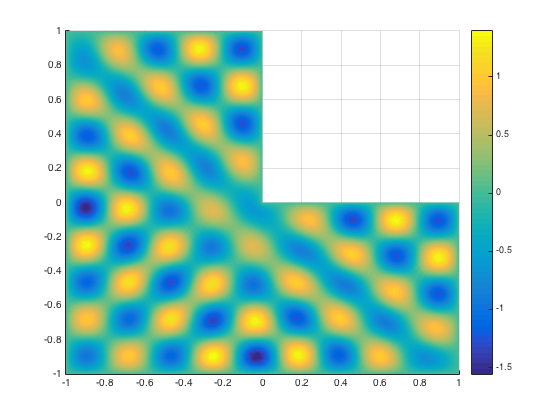} \\
	\includegraphics[width=0.48\textwidth]{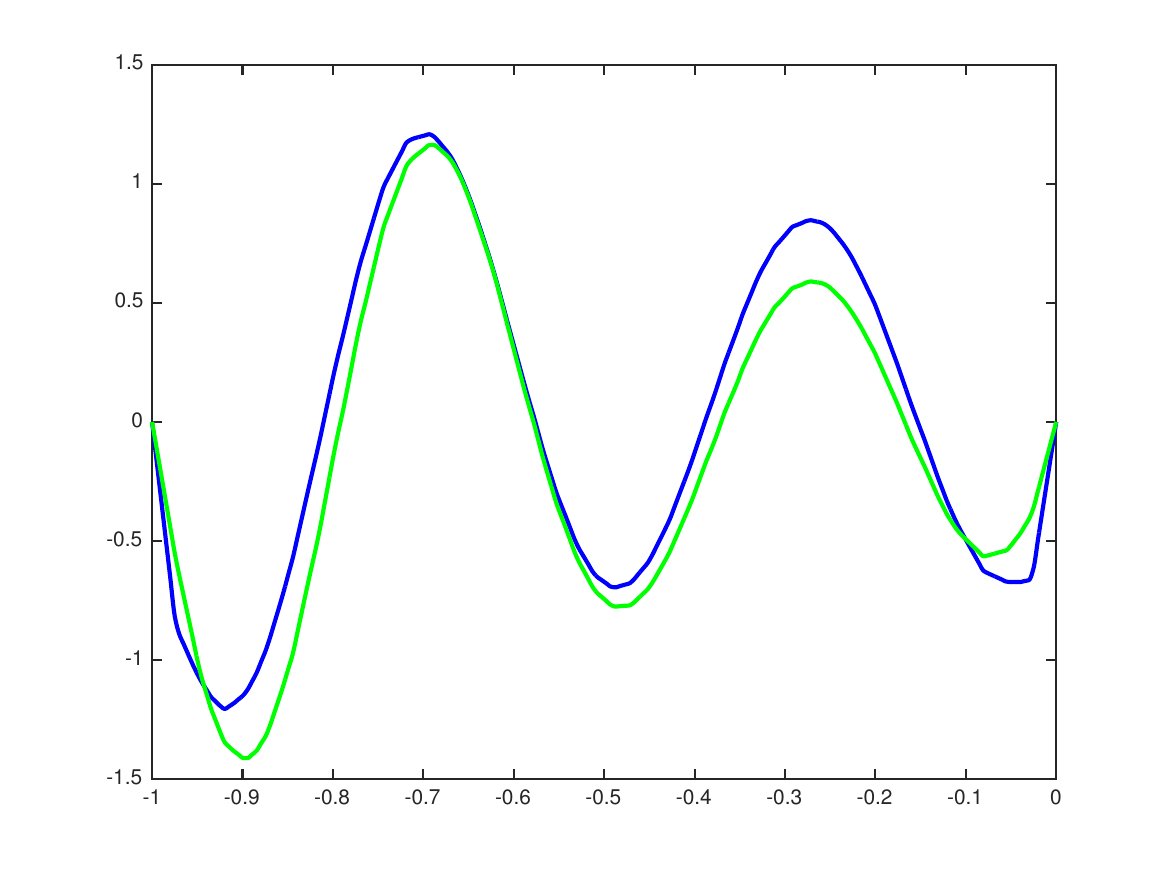} 
	\includegraphics[width=0.48\textwidth]{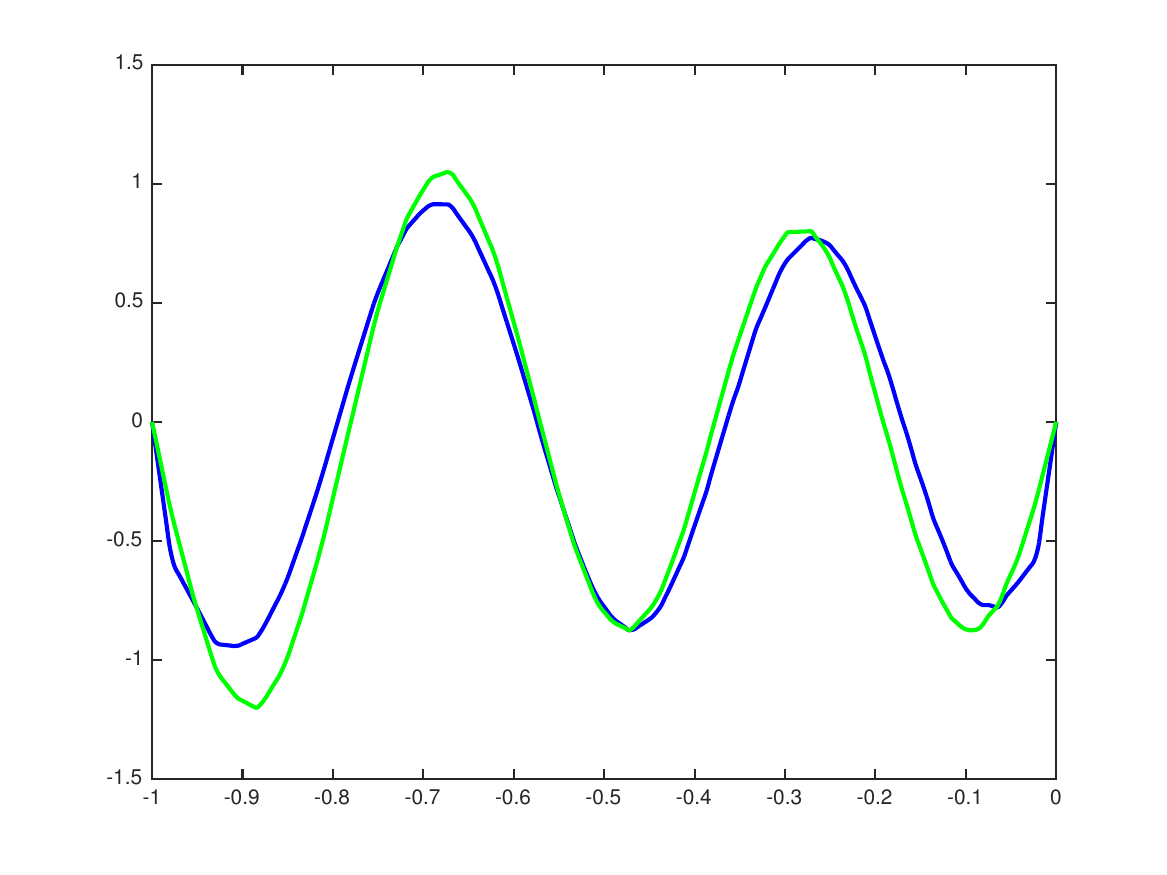} 
	\caption{Finite element approximations of $u^{(91)}$ in the domain $[-1,1]^2\setminus [0,1]^2$. Top: plots of $u_h^{(91)}$ for $s=0.05$ (left) and $s=0.05$ (right), respectively.
Bottom: cross sections at $y=0$ (left) and $y=0.4$; color blue corresponds to $s=0.05$ whereas color green corresponds to $0.95$.}
\label{fig:L_shaped}
\end{figure}

\subsection*{Acknowledgments}
The authors would like to thank G. Grubb and R. Rodr\'iguez for the  valuable help they provided through clarifying discussions on the topic of this paper, and to F. Bersetche for improving the efficiency of the Matlab code employed. 

\bibliographystyle{plain}
\bibliography{bib_autovalores}  

\end{document}